\documentclass[red,11pt,a4paper]{article}

\usepackage{cite}

\usepackage{amsmath}
\usepackage{amscd}
\usepackage{amssymb}
\usepackage{latexsym}
\usepackage{color}
\numberwithin{equation}{section}
\usepackage[normalem]{ulem}

\usepackage[colorlinks,
           linkcolor=blue,      
           anchorcolor=blue,  
           citecolor=red,       
            ]{hyperref}

  \newcommand{\vrp}{{\vr_+}}
    \newcommand{\vrm}{{\vr_-}}
      \newcommand{\bvrp}{{\tilde\vr_+}}
    \newcommand{\bvrm}{{\tilde\vr_-}}
      \newcommand{\gp}{{\gamma^+}}
      \newcommand{\gm}{{\gamma^-}}
      \newcommand{\Hp}{{H_+}}
      \newcommand{\Hm}{{H_-}}
      \newcommand{\Pp}{{p_+}}
      \newcommand{\Pm}{{p_-}}

\def\tilde{\widetilde}

\newcommand{\p}{\partial}

\newcommand{\BR}{{\tilde{R}}}
\newcommand{\BQ}{{\tilde{Q}}}

\newcommand{\Div}{\operatorname{div}}

\newcommand{\pf}{{\noindent\it Proof.~}}
\newcommand{\Ov}[1]{\overline{#1}}

\newcommand{\vf}{{\vc F}}

\newcommand{\vS}{{\bf S}}

\newcommand{\vr}{\varrho}

\newcommand{\vu}{\vc{u}}
\newcommand{\vv}{\vc{v}}

\newcommand{\vc}[1]{{\bf #1}}
\newcommand{\vcg}[1]{{\pmb #1}}

\newcommand{\Grad}{\nabla}

\newcommand{\pt}{\partial_{t}}
\newcommand{\ptb}[1]{\partial_{t}(#1)}

\newcommand{\dx}{{\rm d} {x}}

\newcommand{\dt}{{\rm d} t }

\newcommand{\dxdt}{\dx \,\dt}
\newcommand{\lr}[1]{\left( #1 \right)}
\newcommand{\intO}[1]{\int_{\Omega} #1 \ \dx}

\newcommand{\intTO}[1]{\int_0^T\!\!\!\! \int_{\Omega} #1 \ \dxdt}

\newcommand{\inttauO}[1]{\int_0^\tau\!\! \int_{\Omega} #1 \ \dxdt}

\newcommand{\inttauOB}[1]{ \int_0^\tau\!\!\!\! \int_{\Omega} \left( #1 \right) \ \dxdt}

\newcommand{\eq}[1]{\begin{equation}
\begin{split}
#1
\end{split}
\end{equation}}
\newcommand{\eqh}[1]{\begin{equation*}
\begin{split}
#1
\end{split}
\end{equation*}}

\newcommand{\R}{\mathbb{R}}
\newtheorem{thm}{Theorem}[section]
\newtheorem{lemma}[thm]{Lemma}

\newtheorem{df}[thm]{Definition}
\newtheorem{rmk}[thm]{Remark}

\newcommand{\CX}{{\mathcal X}}
\newcommand{\f}{\frac}

\title{\bf{Weak--strong uniqueness for bi-fluid compressible system with algebraic closure} }

\author{
Yang Li\thanks{School of Mathematical Sciences, Anhui University, Hefei--230601, People's Republic of China, E-mail: \texttt{lynjum@163.com}} \quad
M\'{a}ria Luk\'{a}\v{c}ov\'{a}-Medvid'ov\'{a}\thanks{Institute of Mathematics, Johannes Gutenberg-University Mainz
Staudingerweg 9, 55 128 Mainz, Germany,
E-mail:  \texttt{lukacova@uni-mainz.de}}  \quad
Milan Pokorn\'{y}\thanks{Faculty of Mathematics and
Physics, Mathematical Institute, Charles University, Sokolovsk\'{a} 83,
186 75 Prague 8, Czech Republic, E-mail: \texttt{pokorny@karlin.mff.cuni.cz}} \quad 
Ewelina Zatorska\thanks{Mathematics Institute, University of Warwick, Zeeman Building, Coventry CV4 7AL, United Kingdom, E-mail:  \texttt{ewelina.zatorska@warwick.ac.uk}}
}

\begin{document}
 \maketitle 

\abstract
We consider a real two-fluid system of compressible viscous fluids with a common velocity field and algebraic closure for the pressure law. The constitutive relation involves densities of both fluids through an implicit function. The existence of global-in-time finite energy weak solutions to this system is known since the work of  Novotn\'{y} and Pokorn\'{y} [\emph{Arch. Rational Mech. Anal.}, 2020]. On the other hand, existence of local-in-time strong solutions is due to Piasecki and Zatorska [\emph{J. Math Fluid Mech.}, 2022]. In this paper, we establish the weak--strong uniqueness principle  using the relative entropy method.
In sharp contrast to the two-phase model of Baer-Nunziato type, the volume fraction of phase $+$ obeys a transport equation with an additional nonlinear term. This gives rise to troublesome terms in the relative entropy inequality. We are able to close the estimate by making an elaborate use of the structure of the system.

\medskip

\noindent {\bf Keywords:}  Two-fluid model, algebraic closure, global weak solutions, relative entropy.
\vspace{4mm}

\noindent {\bf2020 Mathematics Subject Classification:} 35Q35, 76N10.

\tableofcontents

\section{Introduction} 
Multi-component compressible flow models (often termed multifluid or two-phase models) arise from interface-averaging procedures applied to microscopic balance laws. In the averaged description, volume fractions become additional unknowns and various closure assumptions are needed to close the macroscopic system. Classical modeling expositions can be found in the monographs of Ishii and Hibiki \cite{IsHi} and Drew and Passman \cite{DP}, as well as in more modern surveys of the mathematical literature on multifluid systems \cite{Br-vulcano, Leb25}.

A common modeling starting point is the two-velocity Baer–Nunziato framework \cite{BN}, from which various one-velocity reductions are obtained by relaxation and closure assumptions \cite{Kapila, Burtea}.

Two analytically distinct one-velocity regimes are:
\begin{itemize}
\item {\bf{Differential closure:}} the volume fraction 
$\alpha$ obeys a transport type equation (typical in one-velocity Baer–Nunziato-type models). This transport equation should include some relaxation term between the pressures, see \cite{Br-vulcano, Burtea}, but simplified models with pure transport equation have also been studied \cite{Novotny19,JN19}.

\item {\bf{Algebraic closure:}} the phases share a common velocity and are coupled by an implicit constitutive relation—most prominently equal pressure $p_+=p_-$. 
In this setting, $\alpha$ is not an independent transported scalar once one passes to the “transported” partial masses $R=\alpha\vrp$, $Q=(1-\alpha)\vrm$; instead, the implicit pressure relation generates an additional nonlinear structure; see below for more details.
\end{itemize}

The present paper analyzes the second class and considers the viscous compressible bi-fluid system in a bounded three-dimensional domain 
$\Omega$, where both phases are barotropic (power-law pressures) and the single momentum equation is driven by the mixture density $R+Q$. Our main contribution  is the proof of the weak–strong uniqueness principle: any finite-energy weak solution in the sense of Novotn\'y–Pokorn\'y \cite{NM20} coincides, on the lifespan of the strong solution, with the strong solution constructed by Piasecki–Zatorska \cite{PZ1}, if both solutions emanate from the same initial data.

For mono-fluid compressible Navier–Stokes equations, Lions-Feireisl theory \cite{Lions, Fe2002} relies crucially on monotonicity/compensated compactness mechanisms tied to a pressure depending on a single conserved quantity. In multifluid systems, the pressure typically depends on several quantities; even when these quantities satisfy the continuity equations, their weak convergences generally do not commute with nonlinear implicit pressure maps. This gives rise to new challenges. 

A first major step for algebraic closures in the semi-stationary regime is the global finite-energy existence theory for the two-fluid Stokes system by Bresch, Mucha, and Zatorska \cite{BMZ19}. Their analysis makes explicit the core obstruction: after reformulation, the pressure argument satisfies a transport equation with a nonlinear production term, preventing the standard effective-viscous-flux compactness route; this is precisely where the Bresch–Jabin compactness criterion was brought in \cite{BrJa}.

The fully time-dependent viscous bi-fluid model with algebraic pressure closure was then treated by Novotn\'y and Pokorn\'y, who proved the existence of global-in-time finite-energy weak solutions for large initial data \cite{NM20}. Their strategy was to transform the original variables into a two-densities Navier–Stokes-type system with a pressure of “complicated structure dependent on two variable densities”, and then to adapt the Lions–Feireisl method supplemented by DiPerna–Lions transport/renormalization ideas\cite{DiPL} refined in \cite{MaMiMuNoPoZa, VaWeYu}. Notably, all of these results required certain constraint allowing comparison between the two continuity equations, which has been subsequently removed by Wen \cite{Wen}. Other extensions of this theory include existence results for the compressible viscous non-resistive MHD model, for which we refer to Li and Sun \cite{LiSun1,LiSun2}.

For the sake of completeness, let us also recall that the existence of weak solutions to the inviscid two-fluid system with algebraic pressure closure was considered in \cite{LiZa}, where the solutions were constructed using the convex integration method. We also refer to \cite{Evje1,LSZ1, Evje2} for the existence results of weak solutions to related models in one space-dimension case.

The results from \cite{NM20} were further adapted  to treat differential closure (transport of $\alpha$) by Novotn\'y \cite{Novotny19}.  
For this system, Jin and Novotn\'y \cite{JN19} established the weak--strong uniqueness property of energy-bounded weak solutions, emphasizing the role of convexity of the associated Helmholtz potential. We refer also to \cite{KKNN} for problems with inflow-outflow boundary conditions and \cite{JKNN} for the weak--strong stability result but for the dissipative measure-valued solutions. Still, the assumption about convexity of the free Helmholtz energy with respect to the conservative variables seemed to be of paramount importance. 
\emph{Removal of this constraint is to date an important open problem that we resolved in the current paper.}

On the strong-solution side, local-in-time well-posedness for the compressible two-fluid algebraic-closure system was obtained by Piasecki and Zatorska  using the maximal-regularity techniques \cite{PZ1}. The existence of global-in-time strong  solutions for small perturbations of constant initial conditions for the nonconservative model is due to Evje, Wang and Wen \cite{Evje}.

Given the coexistence of global weak solutions and local strong solutions, a natural stability question is whether weak solutions are unique as long as a strong solution exists—the weak–strong uniqueness principle.
For the compressible two-fluid system, the only known results, as far as we are aware, are the aforementioned \cite{JN19,JKNN} and a conditional result by Li and Zatorska \cite{LZ22}.

For the mono-fluid compressible Navier–Stokes system, weak–strong uniqueness via the relative entropy method is nowadays classical \cite{FJN}. Beyond uniqueness, the relative entropy inequality has proven to have profound applications in the study of singular-limits problems, and stability of numerical schemes, see, for example, \cite{FKNZ,Luk, Choi}.
For the viscous compressible two-fluid system, the  related results are much more sparse. For the two-fluid model from  \cite{VaWeYu}, the incompressible and/or inviscid singular limits were considered in \cite{YaoZhu, KwonLi, YangCheng}, with the use of the relative entropy method. We also refer to the recent result of Fanelli, Kwon and Wr\'oblewska-Kaminska \cite{Fanelli26}, where the low Mach number limit  for the multi-component MHD system, in case of ill-prepared data, was proven.

The relative entropy inequality developed here has similar implications for singular limits as the pioneering result of Feireisl, Jin and Novotn\'y \cite{FJN} for mono-fluids: it supplies a robust stability mechanism that upgrades formal asymptotics to rigorous convergence, provided suitable strong limit systems are available. In particular, our method gives a direct route to justify various incompressible limits (at least for well-prepared data) studied formally and numerically, for example in  \cite{Bouchut, Leb25, Grenier}. Moreover, the method can also be extended to two-fluid systems with a differential closure relation \cite{Br-vulcano, Burtea}.

In sharp contrast to the differential-closure model \cite{JN19}, the algebraic constraint $\Pp=\Pm$ changes the evolution of the volume fraction to transport with an \emph{additional nonlinear term}. This phenomenon appears already in the Stokes analysis of Bresch–Mucha–Zatorska \cite{BMZ19} and is precisely what breaks the most direct relative-entropy estimates: extra terms arise that do not cancel as in the pure-transport case. 
 Our proof  introduces an additional mechanism to control the defect in the volume fraction generated by the algebraic closure. More concretely, we exploit the specific implicit structure of the pressure law to derive a stability estimate for the volume fractions, which is compatible with the relative energy inequality, and then we conclude with a Gr\"onwall argument. 
\emph{It should be emphasized that, in sharp contrast with Jin--Novotn\'{y} \cite{JN19}, no positive bound for $\alpha$ from below and away from $1$ is required in our analysis.}

The paper is organized as follows. In Section \ref{Sec:2}  after recalling the model and the notion of bounded-energy weak solutions, we summarize the known existence theory for global weak solutions and local strong solutions. We then derive the relative energy inequality, highlighting the additional nonlinear terms caused by the evolution law for the volume fraction in Section \ref{Sec:3}. Next, in Section \ref{Sec:4} we establish the relative entropy inequality with the strong solution as a reference state and estimate the resulting remainder terms using the structure induced by the algebraic closure. Finally, in Section \ref{Sec:5}, we combine these estimates to prove the weak–strong uniqueness principle, which is the main achievement of our paper.

\section{Formulation of the problem}\label{Sec:2}
 Our system of equations reads:
\begin{subequations}\label{spec}
\begin{align}
&\ptb{\alpha\vrp}
+
\Div(\alpha\vrp\vu)=0,\label{spec_1}\\
&\ptb{(1-\alpha)\vrm}
+
\Div((1-\alpha)\vrm\vu)=0,\label{spec_2}\\
&\ptb{(\alpha\vrp
+(1-\alpha)\vrm)\vu}
+\Div((\alpha\vrp+(1-\alpha)\vrm)\vu\otimes\vu)\nonumber\\
&\qquad\qquad\qquad\qquad\qquad
+
\Grad (\alpha p_{+}(\vrp) + (1-\alpha)p_{-}(\vrm))=\Div \vS(\Grad\vu),\label{spec_3}\\
& p_+(\vrp) = p_- (\vrm).\label{spec_4}
\end{align}
\end{subequations}
In the above system, we denoted $\vS=\vS(\Grad\vu)$ to be the usual Newtonian stress tensor given by 
\begin{align*}
  \vS(\Grad\vu)= \mu \Big(\nabla \vu +(\nabla \vu)^T\Big) + \lambda \Div \vu \,{\mathbf I}, 
\end{align*}
where we assume that $\mu>0$ and $2\mu + 3 \lambda \geq 0$.

By $p_+$, $p_-$ we denote the internal barotropic pressures for each fluid with the explicit form:
\eq{
\label{CP}
p_+=( {\vr_+} )^{\gamma^+}, \quad
p_-=( {\vr_-} )^{\gamma^-}, 
}
where $\gamma^\pm>1$ are given adiabatic exponents. 
The system is endowed with the boundary condition
\begin{equation} \label{eq1.2bi}
\vu(t,x)\Big|_{\partial\Omega} = \vc{0}
\end{equation}
in $(0,T)\times \partial \Omega$,  where $\Omega$ is a sufficiently smooth bounded domain of $\R^3$ and $T>0$ is arbitrarily large, however, finite. 
We consider the initial conditions in $\Omega$
\begin{equation} \label{eq1.3bi}
\begin{aligned}
\alpha \vr_+(0,x) &= \alpha _0\vr_{+,0}(x)=:R_0(x), \\
(1-\alpha )\vr_-(0,x) &= (1-\alpha _0) \vr_{-,0}(x)=:Q_0(x), \\
(\alpha \vr_++(1-\alpha )\vr_-)\vu(0,x)&= 
(\alpha _0\vr_{+,0}+(1-\alpha _0)\vr_{-,0})\vu_0(x)=:\vc m_0.
\end{aligned}
\end{equation}

The main result of this paper reads

\begin{thm}\label{thm_entropy}
    Let $(\alpha , \vr_-,\vr_+,\vu)$ be a global weak solution to the system \eqref{spec}--\eqref{eq1.3bi} obtained in \cite{NM20}, and let $(\beta , \bvrm,\bvrp,\vv)$ be the strong solution obtained in
    \cite{PZ1} emanating from the same initial data. Then the two solutions coincide on the time interval $(0,T)$ of a lifespan of the regular solution.
\end{thm}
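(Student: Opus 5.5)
We prove Theorem~\ref{thm_entropy} by the relative entropy (relative energy) method, with the strong solution as the test state. The idea is to reformulate the system in the conservative variables $R=\alpha\vrp$, $Q=(1-\alpha)\vrm$, as in \cite{NM20}. The common pressure $P=p_+(\vrp)=p_-(\vrm)$ is then an implicit function $P(R,Q)$. Adding the two continuity equations with the constraint $\alpha+(1-\alpha)=1$ yields the evolution law for the volume fraction,
\[
\pt\alpha+\vu\cdot\Grad\alpha = \frac{\alpha(1-\alpha)\,(\gamma^- - \gamma^+)}{\ldots}\,\Div\vu ,
\]
that is, transport plus a nonlinear term of the form $g(\alpha,P)\Div\vu$. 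The energy density is
\[
E=\tfrac12 (R+Q)|\vu|^2 + R\,e_+(\vrp)+Q\,e_-(\vrm),
\]
with $e_\pm$ the barotropic internal energies.

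We define the relative energy
\[
\mathcal E(t)=\intO{\Big[\tfrac12 (R+Q)|\vu-\vv|^2 + H(R,Q)-\partial_R H(\BR,\BQ)(R-\BR)-\partial_Q H(\BR,\BQ)(Q-\BQ)-H(\BR,\BQ)\Big]},
\]
where $H(R,Q)=Re_+(\vrp)+Qe_-(\vrm)$ and tildes denote the strong solution. Alternatively, if convexity of $H$ fails, we work directly in the variables $(\alpha,\vr_\pm)$ and add a term measuring $|\alpha-\beta|^2$ weighted by the densities. First we derive the relative energy inequality. We test the weak formulation of the momentum equation with $\vv$ and the continuity equations with the derivatives of $H$ at the strong state. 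Combining these with the energy inequality of the weak solution gives
\[
\mathcal E(\tau)+\inttauO{(\vS(\Grad\vu)-\vS(\Grad\vv)):\Grad(\vu-\vv)}\le \mathcal E(0)+\int_0^\tau \mathcal R\,\dt .
\]
The remainder $\mathcal R$ contains the usual convective terms, bounded by $\|\Grad\vv\|_\infty\mathcal E$. It also contains a pressure remainder, obtained after using the strong equations satisfied by $(\beta,\bvrp,\bvrm,\vv)$.

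Next we estimate the pressure remainder. In the mono-fluid case it equals $-\intO{\Div\vv\,(p(\vr)-p'(\tilde\vr)(\vr-\tilde\vr)-p(\tilde\vr))}$, which is controlled by $\mathcal E$ through convexity. Here it contains the analogous quadratic remainder of $\alpha p_++(1-\alpha)p_-$ in $(R,Q)$, together with extra terms coming from the nonlinear production term in the $\alpha$-equation. These extra terms have the form $\intO{(\alpha-\beta)\,F\,\Div(\vu-\vv)}$, and they are not quadratic in the state difference alone.

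The main obstacle is therefore a coercivity estimate: the relative energy must control $|\alpha-\beta|^2$ (suitably weighted) and $|P-\tilde P|^2$, without lower bounds on $\alpha$ or $1-\alpha$. To get it we exploit the explicit implicit structure. From $P=\vrp^{\gamma^+}=\vrm^{\gamma^-}$ and $R/\vrp+Q/\vrm=1$, we can write $R-\BR$ and $Q-\BQ$ as combinations of $\alpha-\beta$ and $P-\tilde P$ whose coefficients are bounded by the strong solution. This shows that the Bregman remainder of $H$ dominates $\alpha\vrp|\ldots|^2$-type quantities on the region where the densities are near the strong ones. On the region where they are large or small, we use the standard essential/residual splitting of \cite{FJN}. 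The leftover terms involving $\Div(\vu-\vv)$ are absorbed via Young's inequality and Korn's inequality into the dissipation $\delta\|\Grad(\vu-\vv)\|_2^2$, leaving $C\mathcal E$. If the Bregman divergence does not itself control $|\alpha-\beta|^2$, we would add a separate stability estimate. For this we would derive the equation for $\alpha-\beta$ from the renormalized continuity equations in the Lagrangian direction, test it with $\vr$-weighted $(\alpha-\beta)$, and add the result to $\mathcal E$. The right-hand side is again bounded by $C\mathcal E+\delta\|\Grad(\vu-\vv)\|_2^2$.

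Finally, since the data coincide, $\mathcal E(0)=0$. Gronwall's lemma, with a constant depending on $\|\vv\|_{W^{1,\infty}}$, $\|\pt\vv\|$, and the upper and lower bounds of $\bvrp,\bvrm$ provided by \cite{PZ1}, gives $\mathcal E\equiv0$ on $(0,T)$. Hence $\vu=\vv$, $R=\BR$ and $Q=\BQ$, so $\alpha=\beta$ and $\vr_\pm=\tilde\vr_\pm$ on the set where the phases are present.
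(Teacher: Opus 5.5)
Your overall architecture (weak energy inequality tested against the strong solution, an extra $L^2$ estimate for $\alpha-\beta$, then Gr\"onwall) is the paper's. Your $(R,Q)$-Bregman functional is in fact identical to the paper's $\bar{\mathcal E}$ in \eqref{rel_entr_tilde}, since the pressure equality gives $\partial_R H=\Hp'(\vrp)$ and $\partial_Q H=\Hm'(\vrm)$. But the step you present as the main line, that this Bregman term controls $|\alpha-\beta|^2$, is false, and this is the heart of the problem.

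$H$ is convex but degenerate. Both $\partial_RH$ and $\partial_QH$ are functions of the common pressure alone, so its Hessian has rank one. Along each constant-pressure segment $a\mapsto(a\vrp,(1-a)\vrm)$ one has $H=a\Hp(\vrp)+(1-a)\Hm(\vrm)$, which is affine in $a$. Hence the potential part of $\bar{\mathcal E}$ vanishes whenever $\vr_\pm=\tilde\vr_\pm$, whatever $\alpha-\beta$ is. The term $\frac12\|\alpha-\beta\|_{L^2}^2$ is therefore not an optional fallback but indispensable. Without it, even $\inttauO{(R+Q-\BR-\BQ)(\pt\vv+\vv\cdot\Grad\vv)\cdot(\vv-\vu)}$ is not bounded by $\bar{\mathcal E}$, because $R+Q-\BR-\BQ$ contains $(\alpha-\beta)(\bvrp-\bvrm)$. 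Your final step ``$\mathcal E=0\Rightarrow R=\BR,\ Q=\BQ$'' also fails.

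You also misplace the remaining difficulty. In the $(R,Q)$ formulation the continuity equations are pure transport. So the pressure part of the remainder is exactly $-\inttauO{\Div\vv\,\mathcal B_P}$, with $\mathcal B_P=P(R,Q)-\nabla P(\BR,\BQ)\cdot(R-\BR,Q-\BQ)-P(\BR,\BQ)$. No terms of the form $\intO{(\alpha-\beta)F\Div(\vu-\vv)}$ occur there.

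The real obstruction is that the level sets of $P$ are non-parallel lines. A direct computation gives, on $\Omega_{\rm ess}$, $\mathcal B_P=\frac{(\gamma-1)(\alpha-\beta)}{\gamma(1-\beta)+\beta}(P-\tilde P)+O(|P-\tilde P|^2)$, which is linear in $P-\tilde P$ unless $\gamma^+=\gamma^-$. What must be proved is $|\mathcal B_P|\le C\big(\alpha|\vrp-\bvrp|^2+(1-\alpha)|\vrm-\bvrm|^2+|\alpha-\beta|^2\big)$ on $\Omega_{\rm ess}$, together with control on the residual set. You only assert this.

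In the paper this is the treatment of $J_7,J_8$. One adds $-\inttauO{\omega_\alpha\Pp(\bvrp)\Div\vv}$ and $+\inttauO{\omega_\alpha\Pm(\bvrm)\Div\vv}$, which cancel since $\Pp(\bvrp)=\Pm(\bvrm)$. Then $\Pp(\vrp)=\Pm(\vrm)$ turns the main parts into Bregman remainders. The leftover terms are bounded, via the Lipschitz dependence of the coefficients on $\alpha$, by $|\alpha-\beta|\,|\vr_\pm-\tilde\vr_\pm|$.

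The term $\omega_\alpha(\alpha-\beta)\Div(\vu-\vv)$ that you mention appears only in the stability estimate for $\alpha-\beta$ (Lemma~\ref{lemm:alpha}). There it is absorbed into the dissipation because $\omega_\alpha$ is bounded uniformly for $0\le\alpha\le1$; this is why no lower bound on $\alpha$ or $1-\alpha$ is needed. You should test that equation with plain $\alpha-\beta$. Weighting by the weak density $R+Q$ produces $\inttauO{(R+Q)(\alpha-\beta)\omega_\alpha\Div(\vu-\vv)}$. Controlling it would require $\int_{\Omega_{\rm res}}(R+Q)^2\lesssim\mathcal E$, which is not available in general when $\min\{\gamma^+,\gamma^-\}<2$.
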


To prove this theorem, we use the method of relative entropy, inspired by Jin and Novotn\'{y} \cite{JN19}. We will show that the following 
relative entropy functional
\eq{
\label{rel_entr}
&{\cal E}(\alpha,R,Q,\vu \,| \, \beta,\BR,\BQ,\vv)(t)\\
=&
\int_{\Omega}
\frac12(R+Q) |\vu-\vv|^2 (t,x) \dx +\int_{\Omega}
\frac12(\alpha-\beta)^2 (t,x) \dx 
\\
&
+
\int_{\Omega}
\alpha\Big(\Hp(\vrp)-\Hp'(\bvrp)(\vrp-\bvrp)-\Hp(\bvrp)
\Big) 
(t,x) \dx \\
&
+
\int_{\Omega}
(1-\alpha)\Big( \Hm(\vrm)-\Hm'(\bvrm)(\vrm-\bvrm)-\Hm(\bvrm)
\Big) 
(t,x) \dx,
}
where for brevity we set 
\eq{\label{RQ_def}
R=\alpha\vrp,\quad Q=(1-\alpha)\vrm,\quad
\BR=\beta\bvrp,\quad \BQ=(1-\beta)\bvrm,
}
and
\begin{align*}
H_+(\vrp)=\frac{1}{\gp-1}\vrp^\gp,\quad \Hm(\vrm)=\frac{1}{\gm-1}\vrm^\gm,
\end{align*}
remains small, provided it is small initially, and that the regular solution $(\beta, \BR, \BQ,\vv)$ emanating from the same initial data exists.


\subsection{Global weak solutions}

We now recall the definition of a weak solution to system \eqref{spec}, see Definition 2 from Novotn\'{y} and Pokorn\'{y} \cite{NM20}, and the corresponding existence Theorem 2. For notational convenience, we set $I:=(0,T)$.

\begin{df}\label{d1bi}
The quadruple $(\alpha , \vr_-,\vr_+,\vu)$ is called a bounded energy weak solution to the problem \eqref{spec}--\eqref{eq1.3bi} in $I\times \Omega$, if 
\begin{align*}
 & 0\le\alpha \le 1, \quad \vr_\pm \geq 0 \quad \text{a.a. in } \, I\times \Omega, \\
 & \vr_\pm \in L^\infty(I;L^{1}(\Omega)), \quad 
 \vu \in L^2(I;W^{1,2}_0(\Omega;\R^3), \\
 & (\alpha \vr_++(1-\alpha )\vr_- )|\vu|^2 \in L^\infty(I;L^1(\Omega)), 
 \quad 
 \Pm(\vrm)= \Pp(\vrp) \in L^1(I\times \Omega),
\end{align*}
and
\begin{itemize}
\item Continuity equations \eqref{spec_1} and \eqref{spec_2} are satisfied in the weak sense
\begin{equation} \label{eqR}
\begin{aligned}
\intTO{\big(\alpha\vrp \partial_t \psi + \alpha\vrp \vu \cdot \Grad \psi\big)}  + \intO{R_0 \psi(0,\cdot)} =0,
\end{aligned}
\end{equation}
\begin{equation} \label{eqQ}
\begin{aligned}
\intTO{\big((1-\alpha)\vrm \partial_t \psi + (1-\alpha)\vrm \vu \cdot \Grad \psi\big)}  + \int_\Omega Q_0 \psi(0,\cdot) \dx =0,
\end{aligned}
\end{equation}
for any $\psi \in C^1_c([0,T) \times \Ov{\Omega})$.
\item Momentum equation \eqref{spec_3} is satisfied in the weak sense
\begin{equation} \label{equ}
\begin{aligned}
&\intTO{\big((\alpha\vrp
+(1-\alpha)\vrm)\vu \cdot \partial_t \vcg{\varphi} + (\alpha\vrp
+(1-\alpha)\vrm)(\vu\otimes \vu): \Grad \vcg{\varphi} + p \Div \vcg{\varphi}\big)}\\
= &\intTO{ \vS(\Grad\vu):\nabla \vcg{\varphi}} - \intO{\vc{m}_0 \cdot \vcg{\varphi}(0,\cdot) } , 
\end{aligned} 
\end{equation}
where $p=\alpha p_{+}(\vrp) + (1-\alpha)p_{-}(\vrm)$, and
for any $\vcg{\varphi} \in C^1_c([0,T) \times \Omega;\R^3)$.
\item The energy estimate
\eq{\label{energy}
&
\intO{\lr{\frac12(\alpha\vrp
+(1-\alpha)\vrm)|\vu|^2+\alpha \Hp(\vrp)+(1-\alpha)\Hm(\vrm)}(\tau) } \\
&\qquad\qquad+\inttauO{\vS(\Grad\vu): \nabla \vu}\\
&\leq \intO{\lr{\frac12(\alpha\vrp
+(1-\alpha)\vrm)|\vu|^2+\alpha \Hp(\vrp)+(1-\alpha)\Hm(\vrm)}(0) }
}
holds for a.a. $\tau \in I$.
\end{itemize}
\end{df}

Under the algebraic closure for the pressure, Novotn\'y and Pokorn\'y \cite{NM20} (see Section 5 therein) have shown that the system \eqref{spec} admits global-in-time weak solutions.
\begin{thm} \label{t1bi}
Let $0\le\underline a<\overline a<\infty$, $\gamma_{BOG}^\pm= \min\{\frac 23 \gamma^\pm-1,\frac{\gamma^\pm}{2}\}$.
Let $G :=\gamma^+ + \gamma^+_{BOG}$
if $\underline{a} =0$ and $G:= \max\{\gamma^+ +\gamma^+_{BOG}, \gamma^- +\gamma^-_{BOG}\}$ if $\underline{a} >0$. Assume
\begin{equation}\label{gamma}
0<\gamma^-<\infty,\qquad  \gamma^+\ge \frac{9}{5}, \qquad \overline\Gamma< G,
\end{equation}
where
\begin{align*}
    \overline\Gamma :=
\left\{\begin{array}{c}
\max\{\gamma^+ -\frac{\gamma^+} {\gamma^-}+ 1,
\, \gamma^- +\frac{\gamma^-} {\gamma^+}  -\frac{\gamma^+} {\gamma^-}\} 
 \qquad \mbox{if $\underline a=0$} \\
\max\{\gamma^+ -\frac{\gamma^+} {\gamma^-}+ 1, 
\gamma^-
+\frac{\gamma^-} {\gamma^+} -1
 \} \qquad \mbox{if $\underline a>0$}
\end{array}
\right\}.
\end{align*}
Suppose that
\begin{align}
& 0\le\alpha _0\le 1, \qquad 
\underline a\alpha _0
\vr_{+,0}\le(1-\alpha _0)\vr_{-,0}\le \overline a\alpha _0
\vr_{+,0}, \qquad 
p_+(\vr_{+,0})=p_-(\vr_{-,0}), 
\label{Thm_NP} \\
& \vr_{+,0}\in L^{\gamma^+}(\Omega),\qquad 
{\frac{|\vc m_0|^2}{\alpha _0
\vr_{+,0}+(1-\alpha _0)\vr_{-,0}}\in L^1(\Omega).} \nonumber
\end{align}

Then problem \eqref{spec}--\eqref{eq1.3bi} admits at least one weak solution in the sense of Definition \ref{d1bi}. Moreover, $\alpha \vr_+$ belongs to the space $C_{\rm{ weak} }([0,T]; L^{\gamma^+} (\Omega))$ and $(1- \alpha )\vr_-$ belongs to the space $C_{\rm{ weak} }([0,T]; L^{q_{\gamma^+,\gamma^-}} (\Omega))$\footnote{Here, the notation is $q_{\theta_1,\theta_2}=\theta_1$ if $\theta_2<\theta_1$ and $q_{\theta_1,\theta_2}=\theta_2$ if $\theta_2 \geq \theta_1$.},
the vector field $(\alpha \vr_+ + (1- \alpha )\vr_-)\vu$ belongs to $C_{\rm{ weak} }([0,T]; L^r(\Omega;\R^3))$ for some $r>1$, and $p_\pm(\vr_\pm) \in L^r((0,T)\times\Omega)$ for some $r>1$.
\end{thm}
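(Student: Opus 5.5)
The plan is to follow the strategy of Novotn\'y and Pokorn\'y \cite{NM20}. The first step is to pass from the unknowns $(\alpha,\vrp,\vrm)$ to the conservative pair $(R,Q)$ defined in \eqref{RQ_def}. Given $R,Q\ge 0$, the algebraic relation \eqref{spec_4} together with $R=\alpha\vrp$, $Q=(1-\alpha)\vrm$ determines $\alpha\in[0,1]$ and $\vr_\pm$ uniquely. Indeed, set $\vrm=\vrp^{\gp/\gm}$ and solve $R/\vrp+Q/\vrp^{\gp/\gm}=1$ for $\vrp$; the left-hand side is strictly decreasing in $\vrp$, which gives uniqueness.

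This yields a pressure $P(R,Q)=\alpha \Pp(\vrp)+(1-\alpha)\Pm(\vrm)=\Pp(\vrp(R,Q))$. The system \eqref{spec} then becomes a compressible Navier--Stokes system for two continuity equations with a common velocity and the pressure $P(R,Q)$. The structural facts I would record are the following.
\begin{itemize}
\item Monotonicity of $P$ in each variable.
\item The growth bounds $P(R,Q)\approx R^{\gp}+Q^{\gm}$ on the cone $\underline a R\le Q\le \overline a R$.
\item The fact that $P(R,\cdot)$ is Lipschitz/H\"older along $s=Q/R$, with exponents governed by $\gp/\gm$.
\end{itemize}
The exponent $\overline\Gamma$ in \eqref{gamma} should come out of these bounds. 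The energy \eqref{energy} is rewritten as $\intO{\frac12(R+Q)|\vu|^2+\mathcal H(R,Q)}$ with a convex-in-$(R,Q)$ potential.

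The second step is the standard approximation. Add artificial viscosity $\varepsilon\Delta$ to both continuity equations and an artificial pressure $\delta(R^\beta+Q^\beta)$ with $\beta$ large. Use a Faedo--Galerkin scheme for the momentum equation, with a fixed point for the parabolic densities. Initial data are regularised so that $\underline aR_0\le Q_0\le\overline aR_0$. By the comparison principle this inequality propagates to $(R,Q)$, and it persists through the limits; for the approximate problem it is the key substitute for convexity. Uniform bounds come from the energy inequality. Improved pressure integrability $P\in L^{r}$ with $r>1$ comes from the Bogovskii operator tested against $R^{\gamma^+_{BOG}}$ (or $Q^{\gamma^-_{BOG}}$ when $\underline a>0$), which gives the exponent $G$. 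The condition $\overline\Gamma<G$ is exactly what makes the pressure terms equi-integrable.

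Then pass $N\to\infty$, $\varepsilon\to0$ and $\delta\to0$. The weak-continuity statements ($C_{\rm weak}$ in $L^{\gp}$, $L^{q_{\gp,\gm}}$ and $L^r$ for the momentum) follow from these bounds and the equations, by the Arzel\`a--Ascoli type lemma in weak topologies.

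The main obstacle is strong convergence of $(R,Q)$ in the limits $\varepsilon\to0$ and $\delta\to0$, since $P$ depends on two densities. I would first establish the effective viscous flux identity
$$\overline{P(R,Q)T_k(R)}-(2\mu+\lambda)\overline{T_k(R)\Div\vu}=\overline{P(R,Q)}\,\overline{T_k(R)}-(2\mu+\lambda)\overline{T_k(R)}\,\Div\vu.$$
Next I would use the renormalized continuity equations with the DiPerna--Lions theory to show that the ratio $s=Q/R\in[\underline a,\overline a]$, which solves a pure transport equation, satisfies
$$\intO{R\,\bigl(\overline{s^2}-\overline{s}^2\bigr)(t)}=0,$$
provided it vanishes at $t=0$. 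This gives $Q=\overline s R$ up to strong convergence of $s$ where $R>0$. Then $P(R,Q)$ can be replaced by $P(R,\overline sR)$ plus a term controlled via the H\"older continuity of $P$ in $s$. Monotonicity of $P$ in $R$ then yields $\overline{R\log R}=R\log R$ through the oscillation-defect measure argument of Feireisl; this is where $\gp\ge 9/5$ enters. Once $R$, and hence $Q$, converge strongly, the limit passage in the nonlinear pressure is immediate.

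Finally, recovering $\alpha,\vr_\pm$ from $(R,Q)$ through the implicit map gives a solution in the sense of Definition \ref{d1bi}, with $0\le\alpha\le1$ and $\Pp(\vrp)=\Pm(\vrm)$.
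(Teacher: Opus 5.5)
The paper does not prove this theorem; it quotes it from \cite{NM20}, Section 5. The introduction describes that proof only as a rewriting in $(R,Q)$ with a two-density pressure, followed by the Lions--Feireisl scheme plus the DiPerna--Lions renormalization ideas of \cite{MaMiMuNoPoZa,VaWeYu}. Your outline follows that route, but its central compactness step is mis-stated. The identity $\intO{R(\overline{s^2}-\overline{s}^{\,2})}=0$, with plain weak limits of $s_n=Q_n/R_n$, is not what the argument yields. You cannot pass to the limit in the equation for $s_n$: the product $s_n\Div\vu_n$ is weak times weak, and at the $\varepsilon$-level $s_n$ is not even transported. For the same reason $\overline{R_ns_n}\neq R\,\overline{s_n}$ in general. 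What one proves is $\overline{Q^2/R}=Q^2/R$. The function $b(R,Q)=Q^2/R$ is convex, $1$-homogeneous and Lipschitz on the cone $\underline aR\le Q\le\overline aR$. Hence $b(R_n,Q_n)$ satisfies the continuity equation (with an inequality at the $\varepsilon$-level), so does its weak limit $\overline b$, and $b(R,Q)$ satisfies it for the limit. Therefore $\intO{(\overline b-b(R,Q))(t)}\le 0$, while $\overline b\ge b(R,Q)$ by convexity. Expanding, this says $\intO{R_n|s_n-s|^2}\to0$ with $s=Q/R$, i.e. convergence weighted by $R_n$, not by the limit $R$. That is exactly what the replacement estimate $|P(R_n,Q_n)-P(R_n,sR_n)|\le|\partial_QP|\,R_n|s_n-s|$ needs.

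This also shows where the hypotheses enter, and you have assigned them the wrong roles. Renormalizing the \emph{limit} equations (for $b(R,Q)$ above, and for $R\log R$ in Lions' argument) via the DiPerna--Lions commutator lemma with $\vu\in L^2(0,T;W^{1,2}_0)$ requires $R,Q\in L^2((0,T)\times\Omega)$. The condition $\gp\ge 9/5$ is precisely $\gp+\gamma^+_{BOG}\ge 2$, and $Q\le\overline aR$ then gives the same integrability for $Q$. It is not a requirement of Feireisl's oscillation-defect-measure argument, which works for a single density whenever $\gp>3/2$ and is exactly the device for doing without $L^2$; here it is unnecessary. Likewise, $\overline\Gamma<G$ is not what gives integrability of the pressure itself; the Bogovskii estimate does that. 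Its role is to control the replacement error above. For instance, if $\gp>\gm$, then for large $R$ one has $Z\approx R$ and $\partial_QP\approx\gp R^{\gp-\gp/\gm}$. Hence $|\partial_QP|\,R_n\lesssim R_n^{\overline\Gamma}$ with $\overline\Gamma=\gp-\gp/\gm+1$, the first entry in its definition. Equi-integrability of $R_n^{\overline\Gamma}$ (from $\overline\Gamma<G$), combined with $\intO{R_n|s_n-s|^2}\to0$ and the boundedness of $s_n,s$, makes this error vanish in $L^1$. With these corrections your sketch is consistent with the cited proof.
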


\subsection{Local-in-time strong solutions}

Here we recall the local well-posedness for our bi-fluid system with large initial data from the work of Piasecki and Zatorska\cite{PZ1}. 
To this purpose, we consider the following reformulation of the system \eqref{spec} with $R$ and $Q$ defined in \eqref{RQ_def}
\begin{subequations}\label{S}
\begin{align}
  &\pt R+ \Div(R\vu )=0,\label{SR}\\ 
  &\pt Q + \Div(Q\vu )=0 ,\label{ST}\\
&\ptb{(R+Q)\vu}+\Div((R+Q)\vu\otimes\vu)-\Div \vS(\Grad \vu) +\Grad Z^{\gamma^+}=\vc{0}.
\end{align}
\end{subequations} 

In the above system, the pressure $p$ is expressed in terms of $R,Q$. In fact, we have
\begin{equation}\label{pZ}
 p=P(R,Q)= {Z}^{\gamma_+},
\end{equation}
for $Z=Z(R,Q)$ such that 
\eq{\label{TZ}
Q = \lr{1-\frac{R}{Z} } Z^\gamma,\quad  \mbox{with}\quad \gamma=\frac{\gamma_+}{\gamma_-},
}
and
\eq{\label{RleqZ}R\leq Z.}
 Relation \eqref{pZ} identifies $Z$ as $\vrp$ and thus \eqref{RleqZ} can be used to deduce the existence of $\alpha$ such that $0\leq \alpha\leq 1$ and $R=\alpha Z$, see \cite{BMZ19} and also \cite{NM20} for a similar argument.

The existence of local-in-time strong solutions to the system \eqref{S} is stated below.

\begin{thm} \label{t:local}
Assume that $\Omega$ is a uniform $C^2$ domain and $2<p<\infty, \; 3<q<\infty, \; \frac{2}{p}+\frac{3}{q}<1$.
Assume, moreover, that $R_0,Q_0$ satisfy 
\begin{align*}
{R}_0 \ge 0, \quad {Q}_0 \ge 0, \quad R_0+Q_0\geq\kappa \;\; \textrm{for some} \; \kappa>0
\end{align*} 
and $\vu_0$ satisfies the compatibility condition
\begin{align*}
\vu_0|_{\partial\Omega}=\mathbf{0}.
\end{align*} 

Then for any $L>0$ there exists $T>0$ such that if 
\begin{align*}
\|\nabla R_0\|_{L^q(\Omega)}+\|\nabla Q_0\|_{L^q(\Omega)}+\|\vu_0\|_{B^{2-2/p}_{q,p}(\Omega)} \leq L,
\end{align*} 
then \eqref{S} supplemented by the boundary condition \eqref{eq1.2bi}, the initial conditions \eqref{eq1.3bi} and the relation \eqref{TZ} admits a unique solution $(R,Q,\vu)$  in $(0,T)$ with the estimate  
\begin{align*}
\|(R,Q,\vu)\|_{\CX(T)} \leq CL, \qquad \int_0^T \|\nabla \vu(t)\|_{L^\infty(\Omega)} \, \dt 
\leq \delta,    
\end{align*} 
where 
\begin{align*}
\|(g,h,\vf)\|_{{\cal{X}}(T)}:=
\|\vf\|_{L^p(0,T;W^{2,q}(\Omega))}+ \| \vf_t\|_{L^p(0,T;L^q(\Omega))}+\|(g,h)\|_{W^{1,p}(0,T;W^{1,q}(\Omega))}
\end{align*} 
and $\delta$ is a small positive constant.  
\end{thm}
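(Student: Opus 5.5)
The statement is Theorem \ref{t:local}: local existence and uniqueness of a strong solution in the maximal regularity class $\mathcal X(T)$. I would prove it by a Lagrangian-coordinate linearization combined with a Banach fixed point.

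\textbf{Step 1: the pressure map.} I first make the pressure $P(R,Q)=Z^{\gamma^+}$ from \eqref{TZ} a smooth function of $(R,Q)$ near the initial data.
\begin{itemize}
\item Set $F(Z;R,Q)=Z^\gamma-RZ^{\gamma-1}-Q$. Then $\partial_Z F=Z^{\gamma-2}\big(\gamma Z-(\gamma-1)R\big)$.
\item Since $R\le Z$, this derivative is bounded below by $Z^{\gamma-2}\min\{1,\gamma\}\,Z$. It is therefore strictly positive whenever $Z>0$.
\item The bound $R+Q\ge\kappa$ keeps $Z$ away from zero, because $Z\ge R$ and $Q\le Z^\gamma$.
\item By the implicit function theorem, $Z\in C^\infty$ on the set $\{R,Q\ge0,\ R+Q\ge\kappa/2\}$, with bounds depending only on $\kappa$ and $\sup(R+Q)$.
\end{itemize}
Since $q>3$, we have $W^{1,q}\subset C^{0}$. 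Hence $\nabla P(R,Q)=\partial_RP\nabla R+\partial_QP\nabla Q$ lies in $L^q$ with controlled norm, and the map $(R,Q)\mapsto \nabla P$ is Lipschitz in $L^\infty(0,T;W^{1,q})$.

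\textbf{Step 2: Lagrangian reformulation.} I pass to Lagrangian coordinates $X_u(t,y)=y+\int_0^t u(s,y)\,ds$. In these coordinates the continuity equations integrate explicitly:
\[
R=R_0 J^{-1},\qquad Q=Q_0J^{-1},\qquad J=\det\nabla_yX_u .
\]
The momentum equation becomes
\[
(R_0+Q_0)\,\partial_t u-\operatorname{div}\mathbf S(\nabla u)=\mathcal N(u),
\]
where $\mathcal N$ collects the terms arising from $\nabla_x=(\nabla_yX_u)^{-T}\nabla_y$ and the pressure gradient $\nabla P(R_0J^{-1},Q_0J^{-1})$.

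\textbf{Step 3: linear estimate.} I then invoke $L^p$–$L^q$ maximal regularity for the Lamé system with variable, positive, $W^{1,q}$ density coefficient $R_0+Q_0\ge\kappa$ and Dirichlet data. This requires freezing coefficients and a localization argument on the uniform $C^2$ domain. The condition $\frac2p+\frac3q<1$ gives:
\begin{itemize}
\item the trace space $B^{2-2/p}_{q,p}$ for the initial velocity;
\item the embedding $\|\nabla u\|_{L^1(0,T;L^\infty)}\le CT^{1-1/p}\|u\|_{L^p(W^{2,q})}$.
\end{itemize}
The second gives the smallness $\int_0^T\|\nabla u\|_{L^\infty}\,dt\le\delta$ for small $T$. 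This in turn keeps $\nabla X_u$ close to the identity, $J\in[1/2,2]$, and $R+Q\ge\kappa/2$.

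\textbf{Step 4: fixed point.} On the ball $\{\|u\|_{\mathcal X}\le CL\}$, I define $\Phi(w)=u$ as the solution of the linear problem with right-hand side $\mathcal N(w)$. I will show two things:
\begin{itemize}
\item $\Phi$ maps the ball into itself;
\item $\Phi$ is a contraction, $\|\Phi(w_1)-\Phi(w_2)\|_{\mathcal X}\le \tfrac12\|w_1-w_2\|_{\mathcal X}$.
\end{itemize}
Each term of $\mathcal N$ carries a factor $\int_0^T\|\nabla w\|_\infty$ or a power $T^{\epsilon}$, for example through $\|\nabla_yX_w-I\|_{L^\infty(W^{1,q})}\le \int_0^T\|\nabla w\|_{W^{1,q}}$. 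The pressure term is bounded by $T^{1/p}\sup_t\|\nabla P\|_{L^q}$.

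The fixed point yields existence. The regularity of $R,Q$ in $W^{1,p}(W^{1,q})$ follows from the explicit formulas, since $\partial_tJ$ is controlled by $\nabla u\in L^p(W^{1,q})$. Uniqueness follows from the contraction, after transferring back to Eulerian coordinates: the map $X_u$ is a $C^1$ diffeomorphism because $\nabla X_u\approx I$.

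\textbf{Main obstacle.} I expect the hardest part to be the Lipschitz estimate of the nonlinear pressure term $\nabla P(R_0J_w^{-1},Q_0J_w^{-1})$ in $L^p(L^q)$. It requires uniform control of second derivatives of the implicit map $Z(R,Q)$, including near $R=0$ or $Q=0$, and this is exactly where $R+Q\ge\kappa$ and Step 1 are used. A second, technical obstacle is the maximal regularity for the variable-coefficient Lamé operator. For that I would cite the standard perturbation argument rather than reprove it.
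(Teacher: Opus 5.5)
The paper contains no proof of Theorem~\ref{t:local}. It is quoted from Piasecki--Zatorska~\cite{PZ1}. Its formulation (the $W^{1,p}(0,T;W^{1,q})$ norm on the densities, the smallness of $\int_0^T\|\Grad\vu\|_{L^\infty(\Omega)}\,\dt$, data in $B^{2-2/p}_{q,p}$ with the compatibility condition) is what a Lagrangian $L^p$--$L^q$ maximal-regularity and fixed-point argument of your kind produces. Your outline is sound:
\begin{itemize}
\item Since your $\partial_ZF\ge\min\{1,\gamma\}Z^{\gamma-1}$ and $Z+Z^\gamma\ge R+Q\ge\kappa/2$, the map $P(R,Q)$ is smooth, with bounds depending on $\kappa$ and $\sup(R+Q)$, up to $R=0$ or $Q=0$.
\item The densities $R_0/J$, $Q_0/J$ stay in that region once $J\in[1/2,2]$.
\item The geometric terms and the pressure carry the small factors $T^{1-1/p}$ and $T^{1/p}$, as you claim.
\end{itemize}
Because you solve the mass equations explicitly and put the whole pressure into the forcing, you only need maximal regularity for $(R_0+Q_0)\pt\vu-\Div\vS(\Grad\vu)$, not for a linear system coupling densities and velocity. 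That suffices for a short-time, large-data result. The price is that this perturbative treatment of the pressure relies on short time for smallness.

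Three points should be corrected or completed.

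\emph{Density regularity is Lagrangian only.} The bound on $\|(R,Q)\|_{W^{1,p}(0,T;W^{1,q})}$ holds for the Lagrangian densities, and that part of the $\CX(T)$ estimate has to be read that way. It does not survive the transfer back. In Eulerian variables $\Grad\pt R$ contains $(\vu\cdot\Grad)\Grad R$, which is not in $L^q$ when only $R_0\in W^{1,q}$ is known. So in Eulerian variables you keep only $R,Q\in C([0,T];W^{1,q}(\Omega))$ with $\pt R,\pt Q\in L^p(0,T;L^q(\Omega))$. The velocity regularity does transfer, because $X_\vu(t,\cdot)$ is a $W^{2,q}$-diffeomorphism.

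\emph{Role of $\frac2p+\frac3q<1$.} This condition is not what identifies the trace space, which is $B^{2-2/p}_{q,p}$ for every $1<p<\infty$. Nor is it needed for $\int_0^T\|\Grad\vu\|_{L^\infty}\,\dt\le CT^{1-1/p}\|\vu\|_{L^p(0,T;W^{2,q})}$, which uses only $q>3$. What it gives is $B^{2-2/p}_{q,p}\hookrightarrow W^{1,\infty}(\Omega)$, i.e.\ $\Grad\vu\in C([0,T];C(\overline\Omega))$.

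\emph{Uniqueness.} The contraction gives uniqueness only inside the ball. To cover the whole class, note that for any solution the velocity part of $\|\cdot\|_{\CX(T')}$ and $\int_0^{T'}\|\Grad\vu\|_{L^\infty}\,\dt$ tend to $0$ as $T'\to0$. Hence every solution lies in the ball on a short interval, and a continuation argument finishes the proof.
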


\begin{rmk}
    Although Theorem \ref{t:local} provides a local well-posedness for the reformulation of the system \eqref{spec} in which $R=\alpha\vrp$, $Q=(1-\alpha)\vrm$ and $p=p(R,Q)$, due to the algebraic closure \eqref{spec_4}, $\alpha$ can be recovered as $\alpha(R,Q)$, see the discussions in Bresch--Mucha--Zatorska \cite{BMZ19}. Hence, the original system \eqref{spec} admits a unique local strong solution for large initial data. 
\end{rmk}

\begin{rmk}
Theorem \ref{t:local} provides existence of solutions in the regularity class required for the purposes of the relative entropy argument. In particular, one can verify that
\begin{align*}
&\|\pt\vu+\vu\cdot \nabla \vu\|_{L^2(0,T; L^\infty(\Omega))}+\|\vu\|_{L^2(0,T; W^{1,\infty}(\Omega))}\leq C, \\
&\|\vrp\|_{L^\infty(0,T;W^{1,\infty}(\Omega))}+\|\vrm\|_{L^\infty(0,T;W^{1,\infty}(\Omega))}\leq C,
\end{align*}
as well as $\alpha$ such that
\begin{align*}
\|\alpha\|_{L^\infty(0,T; W^{1,\infty}(\Omega))}\leq C.
\end{align*}

\end{rmk}

\section{Relative entropy inequality--preliminaries}\label{Sec:3}

In the following parts of this section, we will aim at writing an equation for the evolution of the relative energy for $\beta,\BR,\BQ$ and $\vv$ sufficiently regular. 
The distinct difference with respect to the two-fluid model of Baer-Nunziato type \cite{JN19} is that in our case the equations for $\vr_+$ and $\vr_-$ are not just continuity equations, but are more complicated. 
This follows from the fact that $\alpha$ does not satisfy a pure transport equation, see \eqref{equa_alpha} for more details.

\subsection{Derivation of the equations for \texorpdfstring{$\alpha$, $\vr_+$ and $\vrm$}{}}
To compute this relative entropy inequality, we will need the equations for $\alpha$, $\vrp$ and $\vrm$.

Formally, we can  calculate
\begin{align*}
 \p_t \alpha + \vu \cdot \Grad \alpha    &  = \p_{R} \alpha \p_t R+ \p_{Q} \alpha \p_t Q + \p_{R} \alpha \vu \cdot \Grad R + \p_{Q} \alpha   \vu \cdot \Grad Q    \\
    &   =  \p_{R} \alpha ( \p_t R + \vu \cdot \Grad R) +
     \p_{Q} \alpha (\p_t Q+\vu \cdot \Grad Q   ) \\
    &   = -  \p_{R} \alpha R \Div \vu - \p_{Q} \alpha Q \Div \vu ;
\end{align*}
whence
\begin{align*}
\p_t \alpha + \vu \cdot \Grad \alpha +
(\p_{R} \alpha R+ \p_{Q} \alpha Q  ) 
\Div \vu=0 . 
\end{align*}
Denoting
\eq{   \label{g_frac}
\gamma=\gamma^+/\gamma^->0
}
and using the equality of pressures \eqref{spec_4} and the constitutive relations \eqref{CP}, we can find that
\eq{  \label{rel_RQ}
R^\gamma(1-\alpha)=Q\alpha^\gamma.
}
Hence, applying $\partial_Q$ and $\partial_R$ on both sides, we obtain
\eq{   \label{deriva_RQ}
\partial_Q\alpha=\frac{-\alpha^\gamma}{Q\gamma\alpha^{\gamma-1}+R^\gamma},\qquad
\partial_R\alpha=\frac{\gamma R^{\gamma-1}(1-\alpha)}{Q\gamma\alpha^{\gamma-1}+R^\gamma}
.  
}
Based on \eqref{rel_RQ}--\eqref{deriva_RQ}, it holds 
\begin{align*}
   \p_{R} \alpha R+ \p_{Q} \alpha Q  &    =    
    \frac{ \gamma R^{\gamma}(1-\alpha)    }{ Q \gamma \alpha^{\gamma-1}+R^{\gamma}    }   +     \frac{ -Q \alpha^{\gamma}   }{ Q \gamma \alpha^{\gamma-1}+R^{\gamma}    }        \\
    &   =   \frac{ \gamma R^{\gamma}(1-\alpha) -Q \alpha^{\gamma}     }{ Q \gamma \alpha^{\gamma-1}+R^{\gamma}    }   \\
    &  = \frac{  (\gamma-1) Q \alpha^{\gamma}    }{  \frac{ \gamma }{\alpha} Q \alpha^{\gamma}  + \frac{ 1}{1-\alpha  } Q \alpha^{\gamma}        }                  \\
    & =   \frac{  \gamma-1 }{  \frac{ \gamma }{\alpha} + \frac{ 1}{1-\alpha  }    }     \\
    & = \frac{ \gamma-1  }{ \gamma(1-\alpha) +\alpha } \alpha (1-\alpha) .  
\end{align*}
Thus, we obtain 
\eq{    \label{equa_alpha}
\p_t \alpha + \vu \cdot \Grad \alpha + \frac{ \gamma-1  }{ \gamma(1-\alpha) +\alpha } \alpha (1-\alpha)
\Div \vu=0. 
}

Equation \eqref{equa_alpha} is, of course, not exactly a pure  transport equation, but we should notice that because $0\leq\alpha\leq1$, the coefficient 
\begin{align*}
   \omega_\alpha: =\frac{ \gamma-1  }{ \gamma(1-\alpha) +\alpha } \alpha (1-\alpha) 
\end{align*}
is also bounded in $L^\infty((0,T)\times\Omega)$. 
Indeed, $\gamma(1-\alpha) +\alpha$ has a strictly positive lower bound, namely $\gamma(1-\alpha) +\alpha \geq \min\{1,\gamma \}>0$. Thus $|\omega_\alpha|\leq \frac{|\gamma-1|}{4 \min\{1,\gamma \}} < \frac{ \gamma+1 }{ \min\{1,\gamma \}  }$.

In a similar manner, we obtain
\eq{    \label{equa_1-alpha}
\p_t (1-\alpha) + \vu \cdot \Grad (1-\alpha) - \frac{ \gamma-1  }{ \gamma(1-\alpha) +\alpha } \alpha (1-\alpha)
\Div \vu=0 . 
}

From equations \eqref{equa_alpha}, and \eqref{spec_1}, \eqref{spec_2}, we now deduce
\eq{\label{alpha_from_R}
\alpha\pt\vrp+  \vrp \pt\alpha  +\Div(\vrp\vu)\alpha+ \vrp   (\vu\cdot\Grad\alpha)  =0,
}
therefore, using \eqref{equa_alpha}, we obtain
\begin{align*}
\pt\vrp+\Div(\vrp\vu)=\vrp \frac{ \gamma-1  }{ \gamma(1-\alpha) +\alpha }  (1-\alpha)
\Div \vu
\end{align*}
and
\begin{align*}
\pt\vrm+\Div(\vrm\vu)=-\vrm \frac{ \gamma-1  }{ \gamma(1-\alpha) +\alpha }  \alpha
\Div \vu.
\end{align*}

 In order to derive equations \eqref{equa_alpha} and \eqref{equa_1-alpha} rigorously, it suffices to follow the arguments from Section 8.1. in \cite{MaMiMuNoPoZa}, see also Lemma 2.4 in \cite{BMZ19}. In particular, we have the following result.
\begin{lemma}\label{Lemma_equiv}
Let $\vu\in L^2(0,T; W^{1,2}_0(\Omega;\R^3))$, $R=\alpha\vrp\in L^\infty(0,T; L^{2}(\Omega))$, and
$Q=(1-\alpha)\vrm \in L^\infty(0,T; L^{2}(\Omega))$,
and let $(R, Q,\vu)$ solve \eqref{SR} and \eqref{ST} in the sense of distributions. Then $Z$ defined by \eqref{TZ} belongs to $L^\infty(0,T; L^{2}(\Omega))$ and satisfies
\eq{\label{SZ}
\pt Z+\Div(Z\vu)+\frac{(1-\gamma) (Z-R)Z}{ \gamma(Z-R)+R}\Div\vu=0,}
in the sense of distributions.

{Conversely}, let $(R,Z,\vu)$ solve \eqref{SR} and \eqref{SZ} in the sense of distributions. Then $Q$ defined by \eqref{TZ} satisfies \eqref{ST} in the sense of distributions. 
\end{lemma}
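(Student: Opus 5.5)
The plan is to follow the renormalization strategy of Section 8.1 in \cite{MaMiMuNoPoZa} and Lemma 2.4 in \cite{BMZ19}. First I establish the well-posedness of the map $(R,Q)\mapsto Z$. For fixed $R\ge 0$, the function $Z\mapsto (1-R/Z)Z^\gamma=Z^\gamma-RZ^{\gamma-1}$ on $[R,\infty)$ vanishes at $Z=R$. Its derivative is $Z^{\gamma-2}(\gamma Z-(\gamma-1)R)=Z^{\gamma-2}(\gamma(Z-R)+R)>0$, and it tends to infinity. Hence \eqref{TZ} has a unique solution $Z=Z(R,Q)\ge R$, and by the implicit function theorem $Z$ is $C^1$ away from $\{R=Q=0\}$. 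It is also positively homogeneous in the sense that $Z(\lambda R,\lambda Q)$ need not equal $\lambda Z$. So I only record the growth bound $Z\le C(R+Q+Q^{1/\gamma})$ and, when $\gamma\ge1$, the cruder $Z\le R+Q^{1/\gamma}$-type bound. From these, $Z\in L^\infty(0,T;L^2)$ follows from the assumed integrability of $R$ and $Q$. This uses $Z^\gamma\ge Q$ and $Z^{\gamma-1}(Z-R)=Q$, which give $Z\le R+Q/Z^{\gamma-1}$, and hence $Z\le 2\max\{R,\,Q^{1/\gamma}\}$ up to constants.

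Next I compute the partial derivatives of $Z$. Differentiating $Q=Z^\gamma-RZ^{\gamma-1}$ gives
\[
\partial_Q Z=\frac{Z^{2-\gamma}}{\gamma(Z-R)+R},\qquad \partial_R Z=\frac{Z}{\gamma(Z-R)+R}.
\]
Thus $R\partial_RZ+Q\partial_QZ-Z=\frac{RZ+Z^{2-\gamma}Q-Z(\gamma(Z-R)+R)}{\gamma(Z-R)+R}$. Using $Z^{2-\gamma}Q=Z(Z-R)$, the numerator equals $(1-\gamma)Z(Z-R)$. This is exactly the coefficient appearing in \eqref{SZ}, once written as $\partial_tZ+\Div(Z\vu)+(R\partial_RZ+Q\partial_QZ-Z)\Div\vu=0$.

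To make this rigorous at the weak level, I first regularize. Since $\vu\in L^2W^{1,2}_0$ and $R,Q\in L^\infty L^2$, the DiPerna--Lions theory \cite{DiPL} shows that $(R,Q)$ are renormalized solutions. More precisely, mollifying in space gives $\partial_t[R]_\varepsilon+\Div([R]_\varepsilon\vu)=r_\varepsilon\to0$ in $L^1_{loc}$, and similarly for $Q$. To avoid the singularity at $R=Q=0$, I apply the chain rule to the composite $b(R,Q)=Z(R+\delta,Q+\delta)$, or to a truncation $T_k(Z)$ of it, evaluated on the mollified pair. Then I let $\varepsilon\to0$ using the boundedness of $\partial_RZ$ and $\partial_QZ$ on the truncated region, and the $L^2$ integrability of the product of $L^2$ densities with $\Div\vu$. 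Finally I remove the truncation and let $\delta\to0$.

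I expect the main obstacle to be this limit passage near vacuum, where $\partial_QZ\sim Z^{2-\gamma}/R$ can blow up when $R\to0$ and $\gamma>2$. The way through is to observe that the nonlinear production term $\frac{(1-\gamma)(Z-R)Z}{\gamma(Z-R)+R}$ is bounded by $C(\gamma)Z$, since $\frac{Z-R}{\gamma(Z-R)+R}\le\max\{1,1/\gamma\}$. The limit equation therefore makes sense in $L^1$, and dominated convergence handles the truncations.

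For the converse direction I reverse the argument. Define $Q=Z^\gamma-RZ^{\gamma-1}=Z^{\gamma-1}(Z-R)$ as a smooth function of $(R,Z)$ on $\{Z\ge R\}$. The same renormalization applies to the pair $(R,Z)$: $R$ solves a continuity equation, and $Z$ solves a transport equation with an $L^1$ source bounded by $CZ|\Div\vu|$, which is still amenable to DiPerna--Lions commutator estimates. The algebraic identity then shows that the source terms cancel to give \eqref{ST}.
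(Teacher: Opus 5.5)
Your strategy, DiPerna--Lions renormalization of the pair $(R,Q)$ through the shifted and truncated map $Z(R+\delta,Q+\delta)$, is the one the paper relies on; it gives no argument of its own and refers to Lemma 2.4 of \cite{BMZ19}. Your formulas $\partial_R Z=Z/(\gamma(Z-R)+R)$ and $\partial_Q Z=Z^{2-\gamma}/(\gamma(Z-R)+R)$ are correct, as is the identity $R\partial_RZ+Q\partial_QZ-Z=(1-\gamma)Z(Z-R)/(\gamma(Z-R)+R)$.

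The step that does not go through is the integrability of $Z$. From $Q=Z^{\gamma-1}(Z-R)$ you get $Z\le 2R+(2Q)^{1/\gamma}$. This gives $Z\in L^\infty(0,T;L^2(\Omega))$ only when $\gamma\ge 1$. For $\gamma<1$ (i.e.\ $\gamma^+<\gamma^-$, which \eqref{gamma} does not exclude) it gives only $L^{2\gamma}$ with $2\gamma<2$. In that case the claim is in fact false under the stated hypotheses: $\vu\equiv\vc{0}$, $R\equiv 0$, $Q(t,\cdot)=Q_0\in L^2(\Omega)\setminus L^{2/\gamma}(\Omega)$ gives $Z=Q_0^{1/\gamma}\notin L^2(\Omega)$. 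This affects your proof, not just the statement. Removing the truncation and letting $\delta\to0$ uses $CZ|\Div\vu|$ as dominating function, so you need $Z\in L^2((0,T)\times\Omega)$. For $\gamma<1$ this must be added as a hypothesis, e.g.\ $Q\in L^\infty(0,T;L^{2/\gamma}(\Omega))$, or directly $Z\in L^2((0,T)\times\Omega)$. For $\gamma\ge1$ the forward direction works as you describe.

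The converse has the mirror-image problem. From $R,Z$ alone you only know $Q\le Z^\gamma$. For $\gamma>1$ this does not make $Q\vu$ or $Q\Div\vu$ integrable, so you must assume e.g.\ $Q\in L^\infty(0,T;L^2(\Omega))$. Also, $Q(R,Z)=Z^{\gamma-1}(Z-R)$ is not smooth: $\partial_R Q=-Z^{\gamma-1}$ blows up at $Z=0$ if $\gamma<1$, and $\partial_Z Q$ grows like $Z^{\gamma-1}$ if $\gamma>1$. So the same shift and truncation are needed there. After that, the identities $R\partial_RQ+Z\partial_ZQ-Q=(\gamma-1)Q$ and $\partial_ZQ\,\frac{(1-\gamma)(Z-R)Z}{\gamma(Z-R)+R}=(1-\gamma)Q$ give the cancellation you want.

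Your description of the singularity is also off. Since $\gamma(Z-R)+R\ge\min\{1,\gamma\}Z$, you have $0\le\partial_RZ\le 1/\min\{1,\gamma\}$ and $0\le\partial_QZ\le Z^{1-\gamma}/\min\{1,\gamma\}$. So Lipschitz continuity is lost as $Z\to0$ for every $\gamma>1$ (e.g.\ $Z=Q^{1/\gamma}$ on $\{R=0\}$), and as $Z\to\infty$ for $\gamma<1$. It is not lost "as $R\to 0$ with $\gamma>2$". What handles it is your shift, which forces $Z\ge R+\delta\ge\delta$, together with the truncation: at fixed $\delta,k$ the gradient is bounded, so the commutator terms vanish as $\varepsilon\to0$.

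The bound by $CZ$ on the production term is what you need afterwards. It has to be checked for the shifted map with unshifted coefficients, and it does hold. With $Z_\delta=Z(R+\delta,Q+\delta)$ one has $0\le R\,\partial_RZ(R+\delta,Q+\delta)\le Z_\delta$ and $0\le Q\,\partial_QZ(R+\delta,Q+\delta)\le Z_\delta/\gamma$. Since $Z_\delta\le 2(R+\delta)+(2(Q+\delta))^{1/\gamma}$, dominated convergence applies once $Z|\Div\vu|\in L^1$.
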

The proof of  Lemma \ref{Lemma_equiv} can be found in \cite{BMZ19}, see Lemma 2.4  and is based on the DiPerna-Lions renormalization theory \cite{DiPL}. 

\begin{rmk}
    Let us remark that the assumptions of Lemma \ref{Lemma_equiv} are satisfied. Indeed, due to the assumption \eqref{gamma} and restriction of the initial data \eqref{Thm_NP}, one can show that the solutions to the continuity equations for $R$ and $Q$ remain in $L^\infty(0,T; L^{2}(\Omega))$.
\end{rmk}

We can now use Lemma \ref{Lemma_equiv} to identify the equations satisfied by $\alpha$, $\vrp$ and $\vrm$.
\begin{lemma}
 Let the assumptions of Lemma \ref{Lemma_equiv} be satisfied. Then $\vrp$ belongs to $L^\infty(0,T; L^{2}(\Omega))$ and it satisfies 
\eq{\label{rhop}
\pt\vrp+\Div(\vrp\vu)=\vrp \frac{ \gamma-1  }{ \gamma(1-\alpha) +\alpha }  (1-\alpha)
\Div \vu}
in the sense of distributions. Moreover, $\alpha$ belongs to $L^\infty((0,T)\times\Omega)$ and satisfies
\eq{    \label{equa_alpha1}
\p_t \alpha + \vu \cdot \Grad \alpha + \frac{ \gamma-1  }{ \gamma(1-\alpha) +\alpha } \alpha (1-\alpha)
\Div \vu=0 
}
in the sense of distributions.
\end{lemma}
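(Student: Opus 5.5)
Lemma \ref{Lemma_equiv} already gives the transport equation \eqref{SZ} for $Z$ in the weak setting, and we identify $Z$ with $\vrp$. The plan is to derive \eqref{rhop} directly from \eqref{SZ} by rewriting its coefficient in terms of $\alpha$. Then \eqref{equa_alpha1} will follow from the continuity equation \eqref{SR} for $R=\alpha\vrp$ together with \eqref{rhop}, via a renormalization argument.

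First, the identification. By \eqref{pZ}--\eqref{RleqZ} and the algebraic closure \eqref{spec_4}, $Z=\vrp$. Hence $\vrp\in L^\infty(0,T;L^2(\Omega))$ by Lemma \ref{Lemma_equiv}. On the set where $Z>0$ we put $\alpha=R/Z\in[0,1]$. Then
\[
Z-R=(1-\alpha)Z,\qquad \gamma(Z-R)+R=Z\bigl(\gamma(1-\alpha)+\alpha\bigr).
\]
Substituting into the coefficient of \eqref{SZ} gives
\[
\frac{(1-\gamma)(Z-R)Z}{\gamma(Z-R)+R}=-\frac{(\gamma-1)(1-\alpha)}{\gamma(1-\alpha)+\alpha}\,\vrp ,
\]
which is exactly \eqref{rhop}. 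The coefficient is bounded by $C(\gamma)Z\in L^\infty(L^2)$ because $\gamma(1-\alpha)+\alpha\ge\min\{1,\gamma\}$. On the vacuum set $\{Z=0\}$ the term vanishes, and we fix $\alpha$ there consistently with \eqref{rel_RQ}. So the distributional identity holds globally. The bound $0\le\alpha\le1$ gives $\alpha\in L^\infty$.

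For \eqref{equa_alpha1}, we mollify in space. Since $\vu\in L^2W^{1,2}_0$ and $R,\vrp\in L^\infty L^2$, the DiPerna--Lions commutator lemma makes the mollified equations for $R$ and $\vrp$ hold with commutators tending to $0$ in $L^1$. We then compute $\pt$ of $R_\varepsilon/(\vrp_\varepsilon+\delta)$ for $\delta>0$ using the chain rule on the smooth approximations. The product of $L^2$ functions with $\Div\vu\in L^2$ is only $L^1$, which is why the bounded function $\alpha$ and the truncation by $\delta$ are needed.

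The main obstacle is passing $\varepsilon\to0$ and then $\delta\to0$ near vacuum. The terms produced are of the form
\[
\frac{R}{\vrp+\delta}\,\frac{\delta}{\vrp+\delta}\,\Div\vu ,
\]
and their coefficients are bounded by $1$. By dominated convergence they tend to $0$ on $\{\vrp>0\}$. On $\{\vrp=0\}$ we have $R=0$, so the corresponding contributions also vanish. In the limit we obtain $\pt\alpha+\Div(\alpha\vu)=\alpha\Div\vu-\omega_\alpha\Div\vu$ in $\mathcal D'$, which is equivalent to \eqref{equa_alpha1}. Alternatively, one may cite the renormalization argument of Section 8.1 in \cite{MaMiMuNoPoZa} for the ratio of two solutions of transport equations, which handles exactly this vacuum issue.
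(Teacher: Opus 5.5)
Your proposal follows the paper's route. You obtain \eqref{rhop} by substituting $Z=\vrp$, $R=\alpha\vrp$ into \eqref{SZ}, and \eqref{equa_alpha1} by combining \eqref{SR} with \eqref{rhop} through $R=\alpha\vrp$; this is the paper's decomposition \eqref{alpha_from_R}, which it leaves formal and your renormalization of $R/(\vrp+\delta)$ makes rigorous.

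One point needs fixing, and the paper's two-line proof also passes over it. Your $\delta\to0$ limit yields $R/\vrp$ on $\{\vrp>0\}$ and $0$ on $\{\vrp=0\}$, so you must explicitly set $\alpha=0$ on the vacuum set. Saying $\alpha$ is chosen ``consistently with \eqref{rel_RQ}'' does not pin this down, because every value is consistent there. Other choices can fail: for instance, a constant in $(0,1)$ would in general violate \eqref{equa_alpha1} wherever $\Div\vu\neq0$ on the vacuum set.
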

\pf Let us note that equation \eqref{rhop} follows directly from \eqref{SZ} after substituting $Z=\vrp$ and $R=\alpha\vrp$. The equation for $\alpha$ is derived in the same way thanks to the decomposition \eqref{alpha_from_R}. The bound $0\leq\alpha\leq 1$ follows from the existence result, see Theorem \ref{t1bi}. $\Box$

\subsection{Derivation of relative entropy inequality}
We now derive the estimate of a part of the relative entropy functional from \eqref{rel_entr}, namely
\eq{
\label{rel_entr_tilde}
&\bar{\cal E}(R,Q,\vu \,| \, \BR,\BQ,\vv)(\tau)\\
&=
\int_{\Omega}
\frac12(R+Q) |\vu-\vv|^2 (\tau,x) \dx 
\\
&
\quad+
\int_{\Omega}
\alpha\Big(\Hp(\vrp)-\Hp'(\bvrp)(\vrp-\bvrp)-\Hp(\bvrp)
\Big) 
(\tau,x) \dx \\
&
\quad+
\int_{\Omega}
(1-\alpha)\Big( \Hm(\vrm)-\Hm'(\bvrm)(\vrm-\bvrm)-\Hm(\bvrm)
\Big) 
(\tau,x) \dx,
}
where $(\beta,\bvrp,\bvrm,\vv)$ are smooth test functions with $\vv|_{\p \Omega}=\mathbf{0}$, and we set $\BR=\beta \bvrp, \,\, \BQ=(1-\beta) \bvrm$.

We now expand
\eq{\label{rel_form}
&\bar{\cal E}(R,Q,\vu \,| \, \BR,\BQ,\vv)\Big|_0^\tau\\
&=\intO{\lr{\frac12(R+Q)|\vu|^2+\alpha \Hp(\vrp)+(1-\alpha)\Hm(\vrm)}}\Big|_0^\tau\\
& \quad 
+\intO{\frac12(R+Q)|\vv|^2}\Big|_0^\tau\\
& \quad 
-\intO{
(R+Q)\vu\cdot\vv}\Big|_0^\tau\\
& \quad 
-\intO{\alpha\vrp \Hp'(\bvrp)
}\Big|_0^\tau+\intO{\alpha\lr{\Hp'(\bvrp)\bvrp-\Hp(\bvrp)}}\Big|_0^\tau\\
& \quad 
-\intO{(1-\alpha)\vrm \Hm'(\bvrm)
}\Big|_0^\tau+\intO{(1-\alpha)\lr{\Hm'(\bvrm)\bvrm-\Hm(\bvrm)}}\Big|_0^\tau
}
and we will treat the right-hand side of \eqref{rel_form} line by line.


\medskip

\noindent
{\it Step 1. } 
The first line is estimated using the energy estimate \eqref{energy}, from which it follows that
\eq{\label{energy_new}
&
\intO{\lr{\frac12(R+Q)|\vu|^2+\alpha \Hp(\vrp)+(1-\alpha)\Hm(\vrm)}}\Big|_0^\tau
\leq 
-\inttauO{\vS(\Grad\vu): \nabla \vu} . 
}


\noindent
{\it Step 2. } We sum the equations for $Q$ and $R$ and test the obtained result by $\frac12|\vv|^2$
\eq{\label{rhov2}
\intO{\frac12(R+Q)|\vv|^2}\Big|_0^\tau =
\inttauO{(R+Q)\vv\cdot\lr{\pt\vv+(\vu \cdot \Grad) \vv}}.
}
Note that \eqref{rhov2} is not precisely the weak formulation; however, it can be easily deduced by considering the test function $\frac 12 |\vv|^2 \psi_\varepsilon(t)$ with $\psi_\varepsilon \in C^\infty_c([0,\tau))$, $\psi_\varepsilon \to {\mathbf 1}_{[0,\tau]}$ uniformly for $\varepsilon \to 0_+$. The same also applies below.  

\noindent
{\it Step 3. } We take $\vv$ as a test function in the momentum equation \eqref{equ}, and integrate by parts to get
\eq{\label{uv}
-\intO{(R+Q)\vu\cdot\vv}\Big|_0^\tau
=&-\inttauOB{(R+Q)\vu\cdot\lr{\pt\vv+(\vu\cdot\Grad)\vv}} \\
& \qquad \qquad 
-
\inttauOB{p\Div\vv-\vS(\Grad\vu):\Grad\vv}.
}


\noindent
{\it Step 4.} Finally, we test the equation for $R$ by $\Hp'(\bvrp)$ and the equation for $Q$ by $\Hm'(\bvrm)$ to get
\eq{   \label{step4+}
-\inttauO{\alpha\vrp\Hp'(\bvrp)}\Big|_0^\tau=-\inttauO{\alpha\vrp\lr{\pt+\vu\cdot\Grad} \Hp'(\bvrp)}
}
\eq{   \label{step4-}
-\inttauO{(1-\alpha)\vrm\Hm'(\bvrm)}\Big|_0^\tau=-\inttauO{(1-\alpha)\vrm\lr{\pt+\vu\cdot\Grad} \Hm'(\bvrm)}.
}

\noindent
{\it Step 5.} Finally, using $\Hp'(\bvrp)\bvrp-\Hp(\vr)=\Pp(\bvrp)$, we obtain
\begin{align*}
\intO{\alpha\lr{\Hp'(\bvrp)\bvrp-\Hp(\bvrp)}}\Big|_0^\tau=\intO{\alpha\Pp(\bvrp)}\Big|_0^\tau.
\end{align*}
In a similar manner
\begin{align*}
\intO{(1-\alpha)\lr{\Hm'(\bvrm)\bvrm-\Hm(\bvrm)}}\Big|_0^\tau
=
\intO{(1-\alpha)\Pm(\bvrm)}\Big|_0^\tau
.
\end{align*}

\noindent
{\it Step 6.} Summing the above expressions, we obtain all together:
\eq{      \label{sumstep1}
&\bar{\cal E}(R,Q,\vu\,| \,\BR,\BQ,\vv)\Big|_0^\tau+\inttauO{\vS(\Grad\vu):\Grad(\vu-\vv)}\\
&\leq \inttauO{(R+Q)(\vv-\vu)\cdot\lr{\pt\vv+(\vu \cdot \Grad) \vv}}-\inttauO{p\Div\vv}\\
&-\inttauO{\alpha\vrp(\pt+\vu\cdot\Grad)\Hp'(\bvrp)}\\
&-\inttauO{(1-\alpha)\vrm(\pt+\vu\cdot\Grad)\Hm'(\bvrm)}\\
&+\intO{\lr{\alpha\Pp(\bvrp)+(1-\alpha)\Pm(\bvrm)}}\Big|_0^\tau.
}
Adding to both sides $-\inttauO{\vS(\Grad\vv):\Grad(\vu-\vv)}$, we get
\eq{      \label{sumstep1b}
&\bar{\cal E}(R,Q,\vu\,| \,\BR,\BQ,\vv)\Big|_0^\tau+\inttauO{\vS(\Grad\vu-\Grad\vv):\Grad(\vu-\vv)}\\
&\leq \inttauO{(R+Q)(\vv-\vu)\cdot\lr{\pt\vv+(\vu \cdot \Grad) \vv}+ \vS(\Grad\vv):\Grad(\vv-\vu)}\\
&-\inttauO{p\Div\vv}\\
&-\inttauO{\alpha\vrp(\pt+\vu\cdot\Grad)\Hp'(\bvrp)}\\
&-\inttauO{(1-\alpha)\vrm(\pt+\vu\cdot\Grad)\Hm'(\bvrm)}\\
&+\intO{\lr{\alpha\Pp(\bvrp)+(1-\alpha)\Pm(\bvrm)}}\Big|_0^\tau.
}

{\it Step 7.} Now observe that because $\bvrp\Grad\Hp'(\bvrp)=\Grad\Pp(\bvrp)$, we have
\begin{align*}
&-\inttauO{(\alpha\Pp(\bvrp)+(1-\alpha)\Pm(\bvrm))\Div\vv}\\
& \quad 
=\inttauO{\Grad\Pp(\bvrp)
\cdot \vv}\\
&\quad
=\inttauO{ \Big(\alpha\Grad\Pp(\bvrp)+(1-\alpha)\Grad\Pm(\bvrm) \Big)
\cdot \vv}
\\
& \quad =\inttauO{\alpha\bvrp\Grad \Hp'(\bvrp) \cdot \vv}\\
& \qquad
+\inttauO{(1-\alpha)\bvrm\Grad \Hm'(\bvrm) \cdot \vv} , 
\end{align*}
due to equality of the pressures, and similarly
\begin{align*}
&
\intO{\lr{\alpha\Pp(\bvrp)+(1-\alpha)\Pm(\bvrm)}}\Big|_0^\tau
\\
&\quad
=\inttauO{
\Big( 
\alpha \pt \Pp(\bvrp) + (1-\alpha) \pt \Pm(\bvrm)
\Big) } 
\\
& \quad 
=\inttauO{\alpha\bvrp\pt \Hp'(\bvrp)}\\
& \qquad 
+\inttauO{(1-\alpha)\bvrm\pt \Hm'(\bvrm)}.
\end{align*}
Substituting for these terms into \eqref{sumstep1b}, we obtain
\eq{      \label{sumstep1c}
&\bar{\cal E}(R,Q,\vu\,| \,\BR,\BQ,\vv)\Big|_0^\tau+\inttauO{\vS(\Grad\vu-\Grad\vv):\Grad(\vu-\vv)}\\
&\leq \inttauO{ \Big(
(R+Q)(\vv-\vu)\cdot\lr{\pt\vv+(\vu \cdot \Grad) \vv}+ \vS(\Grad\vv):\Grad(\vv-\vu)
\Big)
}\\
&+\inttauO{\alpha(\bvrp-\vrp)\pt\Hp'(\bvrp)}\\
&+\inttauO{(1-\alpha)(\bvrm-\vrm)\pt\Hm'(\bvrm)}\\
& +\inttauO{ \Big(\alpha\Pp(\bvrp)+(1-\alpha)\Pm(\bvrm)-p \Big)\Div\vv}\\
&+\inttauO{\alpha(\bvrp\vv-\vrp\vu)  \cdot \Grad\Hp'(\bvrp)}\\
&+\inttauO{(1-\alpha)(\bvrm\vv-\vrm\vu)  \cdot \Grad\Hm'(\bvrm)}\\
&=:I+II+III+\ldots+VI. 
}

This is an analogous form of the relative entropy inequality as the one derived in Theorem 5.2 from \cite{JN19}. Let us stress that up until now we did not use any properties of $(\beta,\bvrp,\bvrm,\vv)$ other than $\Pp(\bvrp)=\Pm(\bvrm)$ and the sufficient regularity in order to use $(\beta,\bvrp,\bvrm,\vv)$ as test functions.

\section{Relative entropy inequality for the strong solution}\label{Sec:4}
We will now restrict our class of test functions to $(\beta,\bvrp,\bvrm,\vv)$ given in Theorem \ref{t:local}, in particular satisfying \eqref{S}. We reformulate and reduce each of the terms from \eqref{sumstep1c}. Then
\begin{align*}
I=&\inttauO{\left[(R+Q-\BR-\BQ)\pt\vv+\lr{(R+Q)\vu-(\BR+\BQ)\vv}\cdot\Grad\vv\right]\cdot(\vv-\vu)}\\
&-\inttauO{(\vv-\vu)\cdot\Grad(\beta\Pp(\bvrp)+(1-\beta)\Pm(\bvrm))}\\
=:&\, I_1+I_2.
\end{align*}
Moreover, we can write
\begin{align*}
&\pt\Hp'(\bvrp)=\Hp''(\bvrp)\pt\bvrp\\
&=-\Hp''(\bvrp)\Div(\bvrp\vv)
+\bvrp \Hp''(\bvrp)\frac{ \gamma-1  }{ \gamma(1-\beta) +\beta }  (1-\beta)
\Div \vv\\
&=-\Grad\Hp'(\bvrp) \cdot \vv-\Pp'(\bvrp)\Div\vv
+\Pp'(\bvrp)\frac{ \gamma-1  }{ \gamma(1-\beta) +\beta }  (1-\beta)
\Div \vv,
\end{align*}
and similarly
\begin{align*}
&\pt\Hm'(\bvrm)=\Hm''(\bvrm)\pt\bvrm\\
&=-\Hm''(\bvrm)\Div(\bvrm\vv)
-\bvrm \Hm''(\bvrm)\frac{ \gamma-1  }{ \gamma(1-\beta) +\beta }  \beta
\Div \vv\\
&=-\Grad\Hm'(\bvrm) \cdot \vv-\Pm'(\bvrm)  \Div\vv
-\Pm'(\bvrm)\frac{ \gamma-1  }{ \gamma(1-\beta) +\beta } \beta
\Div \vv.
\end{align*}
Therefore, the terms $II$ and $III$ can be rewritten as
\begin{align*}
&\inttauO{\alpha(\bvrp-\vrp)\pt\Hp'(\bvrp)}\\
& \quad 
=
-\inttauO{\alpha(\bvrp-\vrp)\vv\cdot\Grad\Hp'(\bvrp)}\\
&
\qquad
-\inttauO{\alpha(\bvrp-\vrp)\Pp'(\bvrp)\Div\vv}\\
&
\qquad 
+\inttauO{\alpha(\bvrp-\vrp)\Pp'(\bvrp)\frac{ \gamma-1  }{ \gamma(1-\beta) +\beta }  (1-\beta)
\Div \vv}\\
&\quad 
=: 
II_1+II_2+II_3
\end{align*}
and
\begin{align*}
&\inttauO{(1-\alpha)(\bvrm-\vrm)\pt\Hm'(\bvrm)}\\
&\quad 
=-\inttauO{(1-\alpha)(\bvrm-\vrm)\vv\cdot\Grad\Hm'(\bvrm)}\\
&\qquad 
-\inttauO{(1-\alpha)(\bvrm-\vrm)\Pm'(\bvrm)\Div\vv}\\
&\qquad 
-\inttauO{(1-\alpha)(\bvrm-\vrm)\Pm'(\bvrm)\frac{ \gamma-1  }{ \gamma(1-\beta) +\beta }  \beta
\Div \vv}\\
& \quad 
=:
III_1+III_2+III_3.
\end{align*}
This means that we have
\begin{align*}
&IV+II_2+III_2=\inttauO{
\alpha
\Big(
\Pp(\bvrp)-\Pp'(\bvrp)(\bvrp-\vrp)-\Pp(\vrp)
\Big)
\Div\vv}\\
&\qquad\qquad\qquad \qquad
+\inttauO{(1-\alpha)
\Big(
\Pm(\bvrm)-\Pm'(\bvrm)(\bvrm-\vrm)-\Pm(\vrm)
\Big)
\Div\vv},\\
&V+II_1=\inttauO{\alpha\vrp(\vv-\vu) \cdot \Grad \Hp'(\bvrp)},\\
&VI+III_1=\inttauO{(1-\alpha)\vrm(\vv-\vu) \cdot \Grad \Hm'(\bvrm)}.
\end{align*}
We can now simplify further by writing
\begin{align*}
V+II_1+VI+&III_1+I_2\\
&= \inttauO{(\vv-\vu) \cdot \Grad \Hp'(\bvrp)(\alpha\vrp-\beta\bvrp)}\\
& \quad 
+\inttauO{(\vv-\vu) \cdot \Grad \Hm'(\bvrm)
\Big(
(1-\alpha)\vrm-(1-\beta)\bvrm
\Big)
}\\
& \quad 
-\inttauO{(\vv-\vu) \cdot \Big(
\Grad\beta \Pp(\bvrp)+\Grad(1-\beta)\Pm(\bvrm)
\Big)
} . 
\end{align*}

In summary, we have shown that the right-hand side of \eqref{sumstep1c} can be rewritten as a remainder ${\cal R}$ which being a sum of eight integrals. More precisely,
\eq{ \label{remainder_c}
&{\cal R}
=\inttauO{\left[(R+Q-\BR-\BQ)\pt\vv+\lr{(R+Q)\vu-(\BR+\BQ)\vv}\cdot\Grad\vv\right]\cdot(\vv-\vu)}\\
&+\inttauO{\alpha
\Big(\Pp(\bvrp)-\Pp'(\bvrp)(\bvrp-\vrp)-\Pp(\vrp) \Big) \Div\vv}\\
&+\inttauO{(1-\alpha) \Big(\Pm(\bvrm)-\Pm'(\bvrm)(\bvrm-\vrm)-\Pm(\vrm) \Big) \Div\vv}
\\ 
&+\inttauO{(\vv-\vu) \cdot \Grad \Hp'(\bvrp)(\alpha\vrp-\beta\bvrp)}\\
&+\inttauO{(\vv-\vu) \cdot \Grad \Hm'(\bvrm)
\Big( (1-\alpha)\vrm-(1-\beta)\bvrm \Big)
}\\
&-\inttauO{(\vv-\vu) \cdot \Big( \Grad\beta \Pp(\bvrp)+\Grad(1-\beta)\Pm(\bvrm) \Big)
}\\
&+\inttauO{\alpha(\bvrp-\vrp)\Pp'(\bvrp)\frac{ \gamma-1  }{ \gamma(1-\beta) +\beta }  (1-\beta)
\Div \vv}\\
&-\inttauO{(1-\alpha)(\bvrm-\vrm)\Pm'(\bvrm)\frac{ \gamma-1  }{ \gamma(1-\beta) +\beta }  \beta
\Div \vv}\\
&=: J_1+\ldots +J_{8} . 
}

\section{The weak--strong uniqueness principle}\label{Sec:5}
We first notice that
\begin{align} \label{res_ess}
 &  \bar{\cal E}(R,Q,\vu\,| \,\BR,\BQ,\vv)(\tau)  \nonumber   \\
 & \qquad 
   \geq C \begin{cases}
        &\displaystyle \intO{  \Big[
        \alpha\lr{\vrp-\bvrp}^2  +(1-\alpha)\lr{\vrm-\bvrm}^2   \Big]
        }, \quad \vrp,\vrm \in \left[c_\star, c^\star\right],  \\[3ex] 
         & \displaystyle   \intO{ 
         \lr{1+\alpha\vrp^\gp+(1-\alpha)\vrm^\gm}
         } ,\quad  \text{ otherwise},
    \end{cases}
\end{align}
for any $c_\star,c^\star>0$ and $\tau\in(0,T)$.
Motivated by this observation, we also define the essential and residual sets corresponding to $\vrp,\vrm$ as follows
\begin{align*}
\Omega_{ \rm ess}(t):=\{x\in\Omega: \vrp,\vrm \in \left[c_\star, c^\star\right]\},
\quad 
\Omega_{\rm res}(t)=\Omega\backslash \Omega_{ \rm ess}(t).
\end{align*}

Let us now more precisely formulate our main theorem comparing global weak solutions in the time interval $(0,T)$ with the local strong solutions.

\begin{thm}\label{thm:entropy}
Let $(\alpha , \vr_-,\vr_+,\vu)$ be a weak solution emanating from the data $(\alpha_0 , \vr_{-,0},\vr_{+,0},\vu_0)$, given by Theorem \ref{t1bi} and let $(\beta, \bvrm,\bvrp,\vv)$ be the strong solution emanating from the data $(\beta_0 , \tilde\vr_{-,0},\tilde\vr_{+,0},\vv_0)$. 
Then there exists a positive constant $C$ depending on the strong solution (but not on the regularity of the weak solution), such that for a.a. $\tau\in(0,T)$
\begin{align*}
  &{\cal E}(\alpha,R,Q,\vu \,| \, \beta,\BR,\BQ,\vv)(\tau)+\inttauO{\vS(\Grad\vu-\Grad\vv):\Grad(\vu-\vv)}   \\
  &\qquad\qquad\qquad
  \leq C 
  {\cal E}(\alpha,R,Q,\vu \,| \, \beta,\BR,\BQ,\vv)(0),
\end{align*}
where ${\cal E}(\alpha,R,Q,\vu \,| \, \beta,\BR,\BQ,\vv)$ is given in \eqref{rel_entr}.

In particular, if $(\beta_0 , \tilde\vr_{-,0},\tilde\vr_{+,0},\vv_0)=(\alpha_0 , \vr_{-,0},\vr_{+,0},\vu_0)$, then 
\begin{align*}
(\alpha , \vr_-,\vr_+,\vu)(\tau,x)=(\beta, , \bvrm,\bvrp,\vv)(\tau,x)
\end{align*}
for a.a. $\tau \in (0,T),\,x\in \Omega$.
\end{thm}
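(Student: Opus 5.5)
We combine the relative energy inequality \eqref{sumstep1c}, rewritten with the remainder $\mathcal R=J_1+\dots+J_8$ from \eqref{remainder_c}, with an additional estimate for $\int_\Omega \frac12(\alpha-\beta)^2\,\dx$. We then close everything by Gr\"onwall's lemma. Throughout, $C$ depends only on the norms of the strong solution listed in the Remark after Theorem \ref{t:local}. We also use the lower bounds $\bvrp,\bvrm\ge c>0$, which follow from $R_0+Q_0\ge\kappa$ and the continuity equations.

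\textbf{Volume fraction.} We test \eqref{equa_alpha1} for $\alpha$ and the corresponding equation for $\beta$ by $\alpha-\beta$; rigorously, we renormalize the difference in the spirit of Lemma \ref{Lemma_equiv}, using that $\beta$ is Lipschitz. Writing $\alpha-\beta=:\delta$ gives
\[
\tfrac12\tfrac{d}{dt}\int_\Omega\delta^2
=\int_\Omega\Big(\tfrac12\delta^2\Div\vu-\delta(\vu-\vv)\cdot\Grad\beta-\delta(\omega_\alpha\Div\vu-\omega_\beta\Div\vv)\Big)\dx .
\]
The term $\frac12\delta^2\Div\vu$ cannot be bounded by $\|\Div \vu\|_{L^\infty}$ directly. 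We therefore split $\Div\vu=\Div\vv+\Div(\vu-\vv)$ and use Young's inequality with $|\delta|\le1$:
\[
\int_\Omega|\delta|^2|\Div(\vu-\vv)|\,\dx\le \varepsilon\|\Grad(\vu-\vv)\|_{L^2}^2+C_\varepsilon\int_\Omega\delta^2\,\dx .
\]
For the nonlinear term we write $\omega_\alpha\Div\vu-\omega_\beta\Div\vv=(\omega_\alpha-\omega_\beta)\Div\vv+\omega_\alpha\Div(\vu-\vv)$. Since $\omega$ is a smooth function of $\alpha\in[0,1]$ and the denominator $\gamma(1-\alpha)+\alpha$ stays away from zero, $|\omega_\alpha-\omega_\beta|\le C|\delta|$. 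The other piece is absorbed as before. The term with $\Grad\beta$ is bounded by $\varepsilon\|\vu-\vv\|_{W^{1,2}}^2+C\|\delta\|_{L^2}^2$ via Poincar\'e's inequality. This is where the algebraic-closure nonlinearity is handled, and no lower bound on $\alpha$ is needed. The dissipation on the left of \eqref{sumstep1c}, together with Korn's inequality, absorbs the $\varepsilon$-terms.

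\textbf{Remainder terms.} We bound each $J_i$ by $C(t)\,\mathcal E+\varepsilon\|\Grad(\vu-\vv)\|^2_{L^2}$ with $C(t)\in L^1(0,T)$.

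$J_1$: we write $R+Q-\BR-\BQ$ as a density difference multiplying $\pt\vv+\vv\cdot\Grad\vv$, plus $(R+Q)(\vu-\vv)\cdot\Grad\vv\cdot(\vv-\vu)\le\|\Grad\vv\|_\infty\mathcal E$. The density difference
\[
R-\BR=\alpha(\vrp-\bvrp)+(\alpha-\beta)\bvrp
\]
is handled by \eqref{res_ess}. On $\Omega_{\rm ess}$ it is controlled by $\alpha(\vrp-\bvrp)^2$ and $\delta^2$. On $\Omega_{\rm res}$ it is controlled by $\mathcal E$ using $\alpha\vrp\le C(1+\alpha\vrp^{\gp})$, Sobolev embedding $\|\vu-\vv\|_{L^6}$ and Young's inequality, as in \cite{FJN}.

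$J_2$, $J_3$: these are bounded by $\|\Div\vv\|_\infty$ times the relative pressure, which is dominated by the relative $H_\pm$ since $\gamma^\pm>1$.

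$J_4$–$J_6$: the key observation is an algebraic cancellation. Since $\Grad\Hp'(\bvrp)=\Grad \Pp(\bvrp)/\bvrp$ and $\Pp(\bvrp)=\Pm(\bvrm)$, the $\beta$-parts combine:
\[
\beta\bvrp\Grad\Hp'(\bvrp)+(1-\beta)\bvrm\Grad\Hm'(\bvrm)+\Grad\beta\,\Pp+\Grad(1-\beta)\Pm=\Grad\Pp(\bvrp)+0 .
\]
The analogous combination with $\alpha$ gives $\Grad\Pp(\bvrp)$ plus terms of the form $\alpha(\vrp-\bvrp)\Grad\Hp'(\bvrp)$. After adding and subtracting, $J_4+J_5+J_6$ therefore reduces to $(\vv-\vu)$ multiplied by differences $\alpha(\vrp-\bvrp)$, $(1-\alpha)(\vrm-\bvrm)$ and $\delta$, each against bounded coefficients. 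These are then estimated as in $J_1$.

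$J_7$, $J_8$: these are the troublesome terms. Linear differences $\alpha(\bvrp-\vrp)$ appear with no velocity factor, so they are not quadratic. We expect the main obstacle here. Our first attempt is to pair them using the closure: linearize $\Pp(\vrp)=\Pm(\vrm)$ around $(\bvrp,\bvrm)$, which gives
\[
\Pp'(\bvrp)(\vrp-\bvrp)=\Pm'(\bvrm)(\vrm-\bvrm)+O\big(|\vrp-\bvrp|^2+|\vrm-\bvrm|^2\big).
\]
Substituting, the coefficient multiplying the linear part becomes
\[
\Pp'(\bvrp)(\vrp-\bvrp)\Big[\alpha(1-\beta)-(1-\alpha)\beta\Big]=\Pp'(\bvrp)(\vrp-\bvrp)\,\delta ,
\]
up to the common factor $(\gamma-1)\Div\vv/(\gamma(1-\beta)+\beta)$. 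This product is quadratic and bounded by $C(\delta^2+\alpha(\vrp-\bvrp)^2)$ on $\Omega_{\rm ess}$. It is exactly why $\int_\Omega\frac12(\alpha-\beta)^2$ is included in $\mathcal E$. On $\Omega_{\rm res}$ we instead use the linear bounds by $\mathcal E$ directly, since $1\lesssim\mathcal E$ there.

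\textbf{Conclusion.} Summing the inequality for $\bar{\mathcal E}$ with the $\delta$-estimate and choosing $\varepsilon$ small gives
\[
\mathcal E(\tau)+\int_0^\tau\!\!\int_\Omega \vS(\Grad\vu-\Grad\vv):\Grad(\vu-\vv)\,\dx\,\dt\le\mathcal E(0)+\int_0^\tau C(t)\,\mathcal E\,\dt .
\]
Gr\"onwall's lemma then yields the claimed estimate. For identical data $\mathcal E(0)=0$, so $\mathcal E\equiv0$, and hence $\vu=\vv$, $\alpha=\beta$ and $\vr_\pm=\tilde\vr_\pm$ a.e. For the densities we use \eqref{res_ess} together with the closure relation on the set where $\alpha$ or $1-\alpha$ vanishes.
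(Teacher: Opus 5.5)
Your proposal follows the paper's proof. It uses the same renormalized estimate for $\alpha-\beta$ (transporting with $\vu$ instead of $\vv$ is immaterial) and the same bounds for $J_1$--$J_6$. For $J_7,J_8$ it uses the same mechanism: both closures $\Pp(\vrp)=\Pm(\vrm)$ and $\Pp(\bvrp)=\Pm(\bvrm)$ turn the linear terms into relative pressures plus terms carrying a factor $\alpha-\beta$.

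There is one bookkeeping slip in the $J_7,J_8$ step. Set $A=\Pp'(\bvrp)(\vrp-\bvrp)$ and $B=\Pm'(\bvrm)(\vrm-\bvrm)$, and let $\mathcal{P}_\pm\ge 0$ denote the relative pressures. The two closures give $B=A+\mathcal{P}_+-\mathcal{P}_-$. Up to the factor $\frac{(\gamma-1)\Div\vv}{\gamma(1-\beta)+\beta}$, your substitution then leaves $-(\alpha-\beta)A+(1-\alpha)\beta(\mathcal{P}_+-\mathcal{P}_-)$. In this expression neither $A$ nor $\mathcal{P}_+$ carries the weight $\alpha$. Hence your claimed pointwise bound $C\big((\alpha-\beta)^2+\alpha(\vrp-\bvrp)^2\big)$ on $\Omega_{\rm ess}$ is false where $\alpha$ is small; for instance it fails at $\alpha=0$ with $|\alpha-\beta|\ll|\vrp-\bvrp|$.

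The slip is easy to repair in either of two ways.
\begin{itemize}
\item On $\Omega_{\rm ess}$, the closure $\vrm=\vrp^{\gp/\gm}$ together with the mean value theorem gives $|\vrp-\bvrp|\le C|\vrm-\bvrm|$. So unweighted squares are bounded by $\alpha(\vrp-\bvrp)^2+(1-\alpha)(\vrm-\bvrm)^2$, which the relative entropy controls.
\item Alternatively, rebalance the weights using $1-\beta=(1-\alpha)+(\alpha-\beta)$ and $\beta=\alpha-(\alpha-\beta)$. This gives $\alpha(1-\alpha)(\mathcal{P}_+-\mathcal{P}_-)-(\alpha-\beta)\big(\alpha A+(1-\alpha)B\big)$, where every term is correctly weighted. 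This is essentially the paper's $L_1,L_2$ decomposition and its $\Pm$-counterpart.
\end{itemize}
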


Before proving Theorem \ref{thm:entropy}, we will state and prove an auxiliary result targeting specifically  the difference of the volumetric fractions $\alpha$ and $\beta$.

\begin{lemma}\label{lemm:alpha}
Let $(\alpha,\vu)$, $(\beta,\vv)$ satisfying 
\begin{align*}
\alpha,\beta\in L^\infty((0,T)\times\Omega)\cap C([0,T];L^1(\Omega)),\quad \vu,\vv\in L^2(0,T; W^{1,2}_0(\Omega;\R^3))
\end{align*}
be two distributional solutions to the transport equation \eqref{equa_alpha}. Suppose, in addition, that
\begin{align*}
\Grad \beta\in L^\infty((0,T)\times\Omega;\R^3),\quad \Div\vv\in L^1(0,T; L^\infty(\Omega))
\end{align*}
and $0\leq \alpha, \beta \leq 1$ almost everywhere in $(0,T)\times \Omega$.
Then, for any $\delta>0$ and for all $\tau\in [0,T]$, we have
\eq{   \label{al-be}
\intO{(\alpha-\beta)^2}\Big|_0^\tau\leq \delta\int_0^\tau \|\vv-\vu\|^{2}_{W^{1,2}(\Omega)}\,\dt 
+
\int_0^\tau   C_\delta(t)  \intO{(\alpha-\beta)^2}\,\dt.
}
with $C_\delta(t)$ integrable over $(0,T)$.
\end{lemma}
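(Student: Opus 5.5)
The plan is to write a transport equation for the difference $w:=\alpha-\beta$ and renormalize it with $b(w)=w^2$. Subtracting \eqref{equa_alpha} for $(\beta,\vv)$ from \eqref{equa_alpha} for $(\alpha,\vu)$ and writing $\omega(s):=\frac{\gamma-1}{\gamma(1-s)+s}s(1-s)$, we get, in the sense of distributions,
\begin{align*}
\pt w+\vu\cdot\Grad w+(\vu-\vv)\cdot\Grad\beta+\omega(\alpha)\Div(\vu-\vv)+\big(\omega(\alpha)-\omega(\beta)\big)\Div\vv=0 .
\end{align*}
Here the term $\vu\cdot\Grad\beta$ has been split as $\vu\cdot\Grad w+\vu\cdot\Grad\beta$, which is legitimate since $\Grad\beta\in L^\infty$. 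The remaining terms are source terms belonging to $L^1(0,T;L^2(\Omega))$, because $w,\omega(\alpha)\in L^\infty$, $\Grad\beta\in L^\infty$ and $\Div\vv\in L^1(0,T;L^\infty(\Omega))$.

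The main technical step is the renormalization. Since $w\in L^\infty$ and $\vu\in L^2(0,T;W^{1,2}_0)$, the DiPerna--Lions commutator lemma \cite{DiPL} applies. We mollify in space, after extending $\vu$ by zero outside $\Omega$ (admissible because $\vu|_{\p\Omega}=0$), and use that the commutator $(\vu\cdot\Grad w)_\varepsilon-\vu\cdot\Grad w_\varepsilon\to0$ in $L^1_{loc}$. This yields
\begin{align*}
\pt w^2+\Div(w^2\vu)-w^2\Div\vu+2w(\vu-\vv)\cdot\Grad\beta+2w\,\omega(\alpha)\Div(\vu-\vv)+2w\big(\omega(\alpha)-\omega(\beta)\big)\Div\vv=0 .
\end{align*}
We then test by a cutoff $\psi_\varepsilon(t)\to\mathbf 1_{[0,\tau]}$, exactly as in Step~2 above, and use $\alpha,\beta\in C([0,T];L^1)$ together with boundedness to pass to $\intO{w^2}\big|_0^\tau$. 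I expect this justification to be the main obstacle: the renormalization must be carried out for the difference equation with the non-transport source terms, and the pointwise-in-time identity must be obtained for all $\tau$. Once done, the spatial divergence term integrates to zero.

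It remains to estimate the four terms using $|w|\le1$ and that $\omega$ is Lipschitz and bounded on $[0,1]$, since its denominator is at least $\min\{1,\gamma\}>0$.
\begin{itemize}
\item Write $\Div\vu=\Div(\vu-\vv)+\Div\vv$. Then $|w^2\Div(\vu-\vv)|\le|w|\,|\Div(\vu-\vv)|\le\frac\delta4|\Grad(\vu-\vv)|^2+C_\delta w^2$, while $w^2\Div\vv\le\|\Div\vv\|_{L^\infty}w^2$.
\item $|2w(\vu-\vv)\cdot\Grad\beta|\le\frac\delta4|\vu-\vv|^2+C_\delta\|\Grad\beta\|^2_{L^\infty}w^2$.
\item $|2w\,\omega(\alpha)\Div(\vu-\vv)|\le\frac\delta4|\Grad(\vu-\vv)|^2+C_\delta\|\omega\|_{L^\infty}^2w^2$.
\item $|2w(\omega(\alpha)-\omega(\beta))\Div\vv|\le 2\,\mathrm{Lip}(\omega)\|\Div\vv\|_{L^\infty}w^2$.
\end{itemize}

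Summing these bounds and integrating over $(0,\tau)$ gives \eqref{al-be} with
\begin{align*}
C_\delta(t)=C\big(1+\delta^{-1}+\|\Div\vv(t)\|_{L^\infty(\Omega)}\big),
\end{align*}
where $C$ depends on $\gamma$ and $\|\Grad\beta\|_{L^\infty}$. This $C_\delta$ is integrable on $(0,T)$ by the assumption $\Div\vv\in L^1(0,T;L^\infty(\Omega))$. No lower bound on $\alpha$ or on $1-\alpha$ is used anywhere in the argument.
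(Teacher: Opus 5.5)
Your proposal is correct and is essentially the paper's proof. After integration, the identity you obtain for $\int_\Omega(\alpha-\beta)^2$ coincides term by term with the paper's, and your Young-inequality bounds, which use $|\alpha-\beta|\le 1$ and the boundedness and Lipschitz continuity of $\omega$ on $[0,1]$, are the same as the paper's. The only difference is presentational: you write the difference equation transported by $\vu$ from the start, whereas the paper transports by $\vv$ and then integrates $(\vv-\vu)\cdot\Grad(\alpha-\beta)\,(\alpha-\beta)$ by parts, so your form never treats $\Grad\alpha$ of the weak solution as a function and is the one the DiPerna--Lions renormalization directly justifies.
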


\begin{pf}
 To rigorously justify this lemma, one may use the DiPerna-Lions regularizing technique \cite{DiPL}, see also Lemma 7.3 in \cite{JN19} for the same argument applied to the pure transport equations for $\alpha,\beta$. In our case, the equations of $\alpha,\beta$ contain additional nonlinear terms, so the analysis is more involved. 
We have
\begin{align*}
&\pt(\alpha-\beta)+\vv\cdot\Grad(\alpha-\beta)\\
&\qquad \qquad  =(\vv-\vu) \cdot \Grad\alpha -\frac{ (\gamma-1) (1-\alpha) }{ \gamma(1-\alpha) +\alpha } \alpha 
\Div \vu+\frac{ (\gamma-1)(1-\beta)  }{ \gamma(1-\beta) +\beta } \beta
\Div \vv.
\end{align*}
Multiplying this equation by $(\alpha-\beta)$, we obtain ($\omega_\alpha =\frac{ (\gamma-1) (1-\alpha) }{ \gamma(1-\alpha) +\alpha } \alpha$, similarly $\omega_\beta$)
\begin{align*}
&\intO{\frac12\partial_t(\alpha-\beta)^2}-\frac12\intO{\Div\vv(\alpha-\beta)^2}\\
& \quad 
=\intO{(\vv-\vu) \cdot \Grad\alpha(\alpha-\beta)}\\
& \qquad 
+\intO{(\beta-\alpha)(\omega_\alpha\Div\vu-\omega_\beta\Div\vv)}\\
& \quad 
=-\frac{1}{2}\intO{\Div(\vv-\vu)(\alpha-\beta)^2}\\
& \qquad 
+\intO{(\vv-\vu) \cdot \Grad\beta(\alpha-\beta)}\\
&\qquad 
+\intO{(\beta-\alpha)(\omega_\alpha-\omega_\beta)\Div\vv}\\
&\qquad 
+\intO{(\beta-\alpha)\omega_\alpha(\Div\vu-\Div\vv)}.
\end{align*}
A straightforward calculation shows 
\begin{align*}
|\omega_\alpha-\omega_\beta|\leq C|\alpha-\beta|,
\end{align*}
where $C=C(\|\alpha\|_{L^\infty((0,T)\times\Omega)}, \|\beta\|_{L^\infty((0,T)\times\Omega)})>0$. Indeed,
\begin{align*}
 \omega_\alpha'=
 \frac{ \gamma-1  }{ \gamma(1-\alpha) +\alpha } (1-\alpha) 
 - \frac{ \gamma-1  }{ \gamma(1-\alpha) +\alpha } \alpha 
 - \alpha (1-\alpha)  
 \frac{ \gamma-1  }{  [\gamma(1-\alpha) +\alpha]^2 } (-\gamma+1), 
\end{align*}
yielding 
\begin{align*}
| \omega_\alpha'| \leq 
2 \f{\gamma+1}{\min\{ \gamma,1 \} }
+
\f{ (\gamma+1)^2  }{   ( \min\{ \gamma,1 \})^2  }.
\end{align*}
We may choose
$\delta>0$ arbitrarily small so that
\begin{align*}
&\intO{\frac12\partial_t(\alpha-\beta)^2}\\
&\quad \leq\delta\|\Div(\vv-\vu)\|^2_{L^2(\Omega)}+\delta\|\vv-\vu\|^2_{L^2(\Omega)}\\
&\quad +C_\delta\lr{\|\alpha-\beta\|_{L^\infty(\Omega)}^2+\|\Grad\beta\|_{L^\infty(\Omega)}^2+\|\Div \vv\|_{L^\infty(\Omega)}}\intO{(\alpha-\beta)^2},
\end{align*}
where we also used the crucial fact that $\omega_\alpha$ is uniformly bounded. 
The statement thus follows by integrating the above inequality with respect to $t\in (0,\tau)$.  $\Box$
\end{pf}

\bigskip

With Lemma \ref{lemm:alpha} at hand, we may proceed to the proof of Theorem \ref{thm:entropy}. We will prove it by estimating $J_1,\ldots, J_{8}$ term by term.

\medskip 

\noindent{\emph{Estimate of $J_1$}.} We split the first term of the remainder \eqref{remainder_c} as follows
\begin{align*}
J_1=&\inttauO{\left[(R+Q-\BR-\BQ)\pt\vv+\lr{(R+Q)\vu-(\BR+\BQ)\vv}\cdot\Grad\vv\right]\cdot(\vv-\vu)}\\
=&\inttauO{\left[(R-\BR+Q-\BQ)(\pt\vv+ \vv\cdot\Grad\vv)\right]\cdot(\vv-\vu)}\\
&+\inttauO{(R+Q)(\vu-\vv)\cdot\Grad\vv\cdot(\vv-\vu)}.
\end{align*}
The second term can be estimated by $ \int_0^\tau C(t) \bar{\cal E}(R,Q,\vu|\BR,\BQ,\vv)\,\dt$ with  $C(t)$ depending on $\|\Grad\vv\|_{L^\infty(\Omega)}(t)$.
To estimate the first term, we use
\eq{\label{decomp}
R-\BR=\alpha(\vrp-\bvrp)-\bvrp(\beta-\alpha),
}
and similarly for $Q-\BQ$. Thus, together, we obtain
\begin{align*}
|R-\BR+Q-\BQ|\leq C
\Big(
\sqrt{\alpha}|\vrp-\bvrp|+\sqrt{1-\alpha}|\vrm-\bvrm|+|\beta-\alpha| 
\Big),
\end{align*}
where $C =C(t)>0$ depends only on $\|(\bvrp,\bvrm)\|_{L^\infty(\Omega)}(t)$ and on $\|\alpha\|_{L^\infty(\Omega)}(t)$.
Therefore,
\begin{align*}
&\inttauO{\mathbf{1}_{ \rm ess}\left[(R-\BR+Q-\BQ)(\pt\vv+\vv\cdot\Grad\vv)\right]\cdot(\vv-\vu)}\\
&\quad \leq \delta \int_0^\tau\|\vv-\vu\|_{L^2(\Omega)}^2\,\dt +\int_0^\tau C_\delta(t)  \Big(\bar{\cal E}(R,Q,\vu|\BR,\BQ,\vv)+\|\alpha-\beta\|_{L^2(\Omega)}^2 \Big)\,\dt,
\end{align*}
with $C_\delta(t)$ proportional to $\|\pt\vv+\vv\cdot \nabla\vv\|^2_{L^\infty(\Omega)}(t)$. For the residual part of this term in $J_1$, considering the small and large values of $\vrm,\vrp$ separately, we get
\begin{align*}
&\inttauO{\mathbf{1}_{ \rm res}\left[(R-\BR+Q-\BQ)(\pt\vv+\vv\cdot\Grad\vv)\right]\cdot(\vv-\vu)}\\
&\quad\leq \delta \int_0^\tau\|\vv-\vu\|_{L^2(\Omega)}^2\,\dt +\int_0^\tau C_\delta(t) \bar{\cal E}(R,Q,\vu|\BR,\BQ,\vv)\,\dt\\
&\qquad +\int_0^\tau \|\pt\vv+\vv\cdot \nabla \vv\|_{L^\infty(\Omega)}\|R|_{ \rm res}, Q|_{ \rm res}\|_{L^1(\Omega)}^{\frac{1}{2}}\|(R+Q)|\vu-\vv|^2\|_{L^1(\Omega)}^{\frac{1}{2}} \, \dt \\
&\quad \leq \delta \int_0^\tau\|\vv-\vu\|_{L^2(\Omega)}^2\,\dt +\int_0^\tau C_\delta(t) \bar{\cal E}(R,Q,\vu|\BR,\BQ,\vv)\,\dt,
\end{align*}
with $C_\delta(t)$ proportional to $ \|\pt\vv+\vv\cdot \nabla \vv\|_{L^\infty(\Omega)}(t)$. Ultimately, we have
\begin{align*}
J_1\leq \delta \int_0^\tau\|\vv-\vu\|_{L^2(\Omega)}^2\,\dt +\int_0^\tau C_\delta(t) \bar{\cal E}(R,Q,\vu|\BR,\BQ,\vv)\,\dt,
\end{align*}
where $C_\delta(t)$ depends on $\|\pt\vv+\vv\cdot\Grad\vv\|_{L^\infty(\Omega)}(t)$ and  on $\|\Grad\vv\|_{L^\infty(\Omega)}(t)$ so that $C_\delta(t)$ is integrable over $(0,T)$.

\medskip 

\noindent{\emph{Estimate of $J_2$ and $J_3$}.} Again, both terms will be considered on the essential and residual parts of the domain. Using Taylor expansion for $J_2$, we obtain
\begin{align*}
&\inttauO{\mathbf{1}_{ \rm ess}\alpha 
\Big(
\Pp(\bvrp)-\Pp'(\bvrp)(\bvrp-\vrp)-\Pp(\vrp)
\Big)
\Div\vv}\\
&\leq C\inttauO{\mathbf{1}_{ \rm ess}\alpha|\bvrp-\vrp|^2}
\leq \int_0^\tau C(t) \bar{\cal E}(R,Q,\vu|\BR,\BQ,\vv)\,\dt,
\end{align*}
where $C(t)$ is proportional to $\|\alpha\|_{L^\infty(\Omega)}(t) \|\Div\vv\|_{L^\infty(\Omega)}(t)$. The estimate for the essential part of $J_3$ is similar. The residual part is bounded directly from \eqref{res_ess} and $C(t)$ is again integrable. In summary, we have
\begin{align*}
J_2+J_3\leq  \int_0^\tau C(t) \bar{\cal E}(R,Q,\vu|\BR,\BQ,\vv)\,\dt.
\end{align*}

\medskip 

\noindent{\emph{Estimate of $J_4$ and $J_5$}.} Both terms are again treated in an analogous way, so we estimate only one of them. Using the decomposition \eqref{decomp}, we easily verify that
\begin{align*}
J_4&=\inttauO{(\vv-\vu)\cdot\Grad \Hp'(\bvrp)(\alpha\vrp-\beta\bvrp)}\\
& \leq \delta \int_0^\tau\|\vv-\vu\|_{L^2(\Omega)}^2  \,\dt 
+\int_0^\tau C_\delta(t) \Big(\bar{\cal E}(R,Q,\vu|\BR,\BQ,\vv)+\|\alpha-\beta\|_{L^2(\Omega)}^2 \Big)\,\dt,
\end{align*}
where $C_\delta(t)$ depends on $\|\bvrp\|_{W^{1,\infty}(\Omega)}(t)$. Moreover,
\begin{align*}
&J_5\leq \delta \int_0^\tau\|\vv-\vu\|_{L^2(\Omega)}^2 \, \dt
+\int_0^\tau C_\delta(t) \Big(\bar{\cal E}(R,Q,\vu|\BR,\BQ,\vv)+\|\alpha-\beta\|_{L^2(\Omega)}^2 \Big)\,\dt,
\end{align*}
with $C_\delta(t)$ depending on $\|\bvrm\|_{W^{1,\infty}(\Omega)}(t)$.

\medskip

\noindent{\emph{Estimate of $J_6$}.} This term vanishes thanks to the algebraic closure $\Pp(\bvrp)=\Pm(\bvrm)$. Indeed, we have
\begin{align*}
J_{6}=&-\inttauO{(\vv-\vu)\cdot 
\Big(
\Grad\beta \,\Pp(\bvrp)+\Grad(1-\beta) \,\Pm(\bvrm)
\Big)
}\\
=&-\inttauO{(\vv-\vu)\cdot 
\Big(
\Grad\beta \,  \Pp(\bvrp)-\Grad\beta  \, \Pm(\bvrm)
\Big)
}\\
=& \, 0.
\end{align*}


\medskip

\noindent{\emph{Estimate of $J_7$ and $J_{8}$}.} To estimate  the remaining terms, we couple them with terms 
\eqh{-\inttauO{\alpha\Pp(\bvrp)\frac{ (\gamma-1) (1-\alpha)  }{ \gamma(1-\alpha) +\alpha } 
\Div \vv}}
and
\eqh{\inttauO{\alpha\Pm(\bvrm)\frac{ (\gamma-1) (1-\alpha)  }{ \gamma(1-\alpha) +\alpha } 
\Div \vv},}
 that cancel each other out due to the equality of the pressures $\Pp(\bvrp)=\Pm(\bvrm)$.
 
At first, we transform
\begin{align*}
J_{7}&-\inttauO{\alpha\Pp(\bvrp)\frac{ (\gamma-1) (1-\alpha)  }{ \gamma(1-\alpha) +\alpha } 
\Div \vv}\\
=&\inttauO{\alpha(\bvrp-\vrp)\Pp'(\bvrp)\frac{ (\gamma-1)(1-\beta)  }{ \gamma(1-\beta) +\beta }  
\Div \vv}\\
&-\inttauO{\alpha\Pp(\bvrp)\frac{ (\gamma-1) (1-\alpha)  }{ \gamma(1-\alpha) +\alpha } 
\Div \vv}\\
=&-\inttauO{\alpha\frac{ (\gamma-1)(1-\alpha)  }{ \gamma(1-\alpha) +\alpha } \left[\Pp(\bvrp)-(\bvrp-\vrp)\Pp'(\bvrp)\right]\Div\vv} \\
&-\inttauO{\alpha\lr{\frac{ (\gamma-1)(1-\alpha)  }{ \gamma(1-\alpha) +\alpha } -\frac{ (\gamma-1) (1-\beta) }{ \gamma(1-\beta) +\beta} }(\bvrp-\vrp)\Pp'(\bvrp)\Div\vv}\\
=:&\, L_1+L_2.
\end{align*}
Notice that the last term $L_2$ can already be estimated, because the difference in brackets is controlled by $C|\alpha-\beta|$, with $C>0$ depending on $\|(\alpha,\beta)\|_{L^\infty(\Omega)}$. Thus, we have
\begin{align*}
L_2\leq \int_0^\tau C(t) \Big(\bar{\cal E}(R,Q,\vu\,| \,\BR,\BQ,\vv)+\|\alpha-\beta\|_{L^2(\Omega)}^2 \Big)\,\dt,
\end{align*}
with $C(t)>0$ depending additionally on $\|\bvrp\|_{L^\infty(\Omega)}(t)$, and $\|\Div\vv\|_{L^\infty(\Omega)}(t)$.
Note that $C(t)$ is integrable over $(0,T)$.

We now transfer in a similar way
\begin{align*}
J_{8}&+\inttauO{\alpha\Pm(\bvrm)\frac{ (\gamma-1) (1-\alpha)  }{ \gamma(1-\alpha) +\alpha } 
\Div \vv}\\
=&-\inttauO{(1-\alpha)(\bvrm-\vrm)\Pm'(\bvrm)\frac{ (\gamma-1)\beta  }{ \gamma(1-\beta) +\beta }  
\Div \vv}\\
&+\inttauO{\alpha\Pm(\bvrm)\frac{ (\gamma-1) (1-\alpha)  }{ \gamma(1-\alpha) +\alpha } 
\Div \vv}\\
=&\inttauO{\alpha\frac{ (\gamma-1)(1-\alpha)  }{ \gamma(1-\alpha) +\alpha }  [\Pm(\bvrm)-(\bvrm-\vrm)\Pm'(\bvrm)]\Div\vv}\\
& +\inttauO{(1-\alpha)\lr{\frac{ (\gamma-1)\alpha  }{ \gamma(1-\alpha) +\alpha }  -\frac{ (\gamma-1)\beta  }{ \gamma(1-\beta) +\beta }  }(\bvrm-\vrm)\Pm'(\bvrm)\Div\vv}\\
=:& \, {\emph{\L}}_1+{\emph{\L}}_2.
\end{align*}
The term ${\emph{\L}}_2$ can be estimated by the same way as $L_2$, leading to
\begin{align*}
{\emph{\L}}_2\leq \int_0^\tau C(t) \Big(\bar{\cal E}(R,Q,\vu\,| \,\BR,\BQ,\vv)+\|\alpha-\beta\|_{L^2(\Omega)}^2 \Big)\,\dt,
\end{align*}
with $C(t)$ depending on $\|\bvrm\|_{L^\infty(\Omega)}(t)$ and $\|\Div\vv\|_{L^\infty(\Omega)}(t)$ so that it is integrable.

Finally, let us look at the terms $L_1$ and ${\emph{\L}}_1$. Using again the equality of pressures, we can write
\begin{align*}
&L_1+{\emph{\L}}_1 \\
&\quad= -\inttauO{\alpha\frac{ (\gamma-1)(1-\alpha)  }{ \gamma(1-\alpha) +\alpha } \Div \vv \Big( \Pp(\bvrp)-(\bvrp-\vrp)\Pp'(\bvrp)-\Pp(\vrp) \Big)
}\\
& \qquad \,\,
+\inttauO{\alpha\frac{ (\gamma-1)(1-\alpha)  }{ \gamma(1-\alpha) +\alpha } \Div \vv \Big(  \Pm(\bvrm)-(\bvrm-\vrm)\Pm'(\bvrm)-\Pm(\vrm)  \Big)
},
\end{align*}
and these two integrals can be treated exactly as the terms $J_2,\ J_3$ before.
Summarizing, we have shown that
\begin{align*}
J_7+J_{8}=&\, 
L_1+L_2+{\emph{\L}}_1+{\emph{\L}}_2\\
\leq & \,
\int_0^\tau C(t) \Big(\bar{\cal E}(R,Q,\vu\,| \,\BR,\BQ,\vv)+\|\alpha-\beta\|_{L^2(\Omega)}^2 \Big)\,\dt.
\end{align*}

\medskip

\noindent{\emph{Conclusion}.} Summarizing the above estimates of the remainder ${\cal R}$ given by \eqref{remainder_c} and coming back to \eqref{sumstep1c}, we obtain
\begin{align*}
&\bar{\cal E}(R,Q,\vu\,| \,\BR,\BQ,\vv)\Big|_0^\tau+\inttauO{\vS(\Grad\vu-\Grad\vv):\Grad(\vu-\vv)}\\
& \quad 
\leq
\delta \int_0^\tau\|\vv-\vu\|_{W^{1,2}(\Omega)}^2\,\dt+ \int_0^\tau C_\delta(t)
\Big(
\bar{\cal E}(R,Q,\vu\,| \,\BR,\BQ,\vv)+\|\alpha-\beta\|_{L^2(\Omega)}^2
\Big)\,\dt,
\end{align*}
for $C_\delta(t)$ depending on $\|\pt\vv+\vv\cdot \nabla\vv\|_{L^\infty(\Omega)}(t)$, $\|\Grad\vv\|_{L^\infty(\Omega)}(t)$, $\|\bvrp\|_{W^{1,\infty}(\Omega)}(t)$ and $\|\bvrm\|_{W^{1,\infty}(\Omega)}(t)$,  as well as on the upper limits of $\alpha$ and $\beta$, which are equal to $1$, so that $C_\delta$ is integrable over $(0,T)$.

Summing up this estimate with \eqref{al-be} and using the Gr\"{o}nwall argument, we finish the proof of Theorem \ref{thm:entropy}. In particular, our main Theorem \ref{thm_entropy} follows. $\Box$

\bigskip

\noindent{\bf{Acknowledgement.}} The work of Y.L. was supported by National Natural Science Foundation of China (12571228), Natural Science Foundation of Anhui Province (2408085MA018). 
 The work of M.L.-M. was supported by the Gutenberg Research College and by the Deutsche Forschungsgemeinschaft (DFG, German Research Foundation)--project number 233630050--TRR 146 and project number 525853336--SPP 2410 ``Hyperbolic Balance Laws: Complexity, Scales and Randomness". She is also grateful to the Mainz Institute of Multiscale Modelling for supporting her research. The work of M.P. was partially supported by the Czech Science Foundation (GA\v{C}R), project No. 25-16592S. The work of E.Z. was  supported by the EPSRC Early Career Fellowship no. EP/V000586/1.

\vspace{4mm}
\noindent{\bf{Data Availability.}} Data sharing is not applicable to this article as no datasets were generated or analyzed
during the current study.
\vspace{4mm}

\noindent{\bf{Conflicts of interest.}} All authors certify that there are no conflicts of interest for this work.

\bigskip

\noindent{\bf{Publishing licence.}} For the purpose of open access, the author has applied a Creative Commons Attribution (CC BY) licence to any Author Accepted Manuscript version arising from this submission.

\end{document}